\documentclass[9pt]{amsart}

\usepackage{amsmath}
\usepackage{amssymb}
\usepackage{amsthm}
\usepackage{enumerate}
\usepackage[all,arc,curve,color,frame]{xy}
\usepackage[utf8]{inputenc}

\usepackage{color, graphicx}
\usepackage[square,sort,comma,numbers]{natbib}


\renewcommand\section{\@startsection{section}{1}{\z@}
                                   {-3.5ex \@plus -1ex \@minus -.2ex}
                                   {2.3ex \@plus .2ex}
                                   {\normalfont\large\bfseries}}
\renewcommand\subsection{\@startsection{subsection}{2}{\z@}
                                   {-3.25ex\@plus -1ex \@minus -.2ex}
                                   {1.5ex \@plus .2ex}
                                   {\normalfont\normalsize\bfseries}}
\renewcommand\subsubsection{\@startsection{subsubsection}{3}{\z@}
                                   {-3.25ex\@plus -1ex \@minus -.2ex}
                                   {1.5ex \@plus .2ex}
                                   {\normalfont\normalsize\bfseries}}
\renewcommand\paragraph{\@startsection{paragraph}{4}{\z@}
                                   {3.25ex \@plus1ex \@minus.2ex}
                                   {-1em}
                                   {\normalfont\normalsize\bfseries}}

\makeatother

\newdimen\tableauside\tableauside=1.0ex
\newdimen\tableaurule\tableaurule=0.4pt
\newdimen\tableaustep
\def\phantomhrule#1{\hbox{\vbox to0pt{\hrule height\tableaurule
width#1\vss}}}
\def\phantomvrule#1{\vbox{\hbox to0pt{\vrule width\tableaurule
height#1\hss}}}
\def\sqr{\vbox{%
  \phantomhrule\tableaustep

\hbox{\phantomvrule\tableaustep\kern\tableaustep\phantomvrule\tableaustep}%
  \hbox{\vbox{\phantomhrule\tableauside}\kern-\tableaurule}}}
\def\squares#1{\hbox{\count0=#1\noindent\loop\sqr
  \advance\count0 by-1 \ifnum\count0>0\repeat}}
\def\tableau#1{\vcenter{\offinterlineskip
  \tableaustep=\tableauside\advance\tableaustep by-\tableaurule
  \kern\normallineskip\hbox
    {\kern\normallineskip\vbox
      {\gettableau#1 0 }%
     \kern\normallineskip\kern\tableaurule}%
  \kern\normallineskip\kern\tableaurule}}
\def\gettableau#1 {\ifnum#1=0\let\next=\null\else
  \squares{#1}\let\next=\gettableau\fi\next}

\tableauside=1.5ex
\tableaurule=0.2pt

\makeatletter

\renewcommand\section{\@startsection{section}{1}{\z@}
                                   {-3.5ex \@plus -1ex \@minus -.2ex}
                                   {2.3ex \@plus .2ex}
                                   {\normalfont\large\bfseries}}
\renewcommand\subsection{\@startsection{subsection}{2}{\z@}
                                   {-3.25ex\@plus -1ex \@minus -.2ex}
                                   {1.5ex \@plus .2ex}
                                   {\normalfont\normalsize\bfseries}}
\renewcommand\subsubsection{\@startsection{subsubsection}{3}{\z@}
                                   {-3.25ex\@plus -1ex \@minus -.2ex}
                                   {1.5ex \@plus .2ex}
                                   {\normalfont\normalsize\bfseries}}
\renewcommand\paragraph{\@startsection{paragraph}{4}{\z@}
                                   {3.25ex \@plus1ex \@minus.2ex}
                                   {-1em}
                                   {\normalfont\normalsize\bfseries}}

\makeatother

\newcommand{\be}{\begin{equation}}
\newcommand{\ee}{\end{equation}}
\newcommand{\bea}{\begin{eqnarray}}
\newcommand{\eea}{\end{eqnarray}}
\newcommand{\ba}{\begin{array}}
\newcommand{\ea}{\end{array}}

\newcommand{\id}{\hbox{1\kern-.27em l}}

\newcommand{\half}{ {\textstyle \frac{1}{2}  } }

\newcommand{\al}{\alpha}

\newcommand{\bet}{\beta}

\newcommand{\la}{\lambda}

\newcommand{\tha}{\theta}

\newcommand{\cN}{\mathcal{N}}

\newcommand{\D}{{\rm d}}

\newcommand{\emp}{\emptyset}
\newcommand{\non}{\nonumber}

\newcommand{\SO}{\mathrm{SO}}

\newcommand{\Sp}{\mathrm{Sp}}
\newcommand{\su}{\mathrm{su}}

\newcommand{\Spin}{\mathrm{Spin}}

\newtheorem{lem}{Lemma}[section]
\newtheorem{prop}{Proposition}[section]
\newtheorem{thm}{Theorem}[section]
\newtheorem{remark}{Remark}[section]

\newtheorem{conjecture}{Conjecture}[section]

\newtheorem{Def}{Definition}[section]
\newtheorem{ex}{Example}
\newenvironment{rmk}{\begin{remark} \em}{\end{remark}}

\begin{document}

\title[Invariants of rigid surface operators]
{Invariants of rigid surface operators}

\author[Chuanzhong Li]{Chuanzhong Li}
\address[Chuanzhong Li]{$^1$ College of Mathematics and Systems Science, Shandong University of Science and Technology, Qingdao,266590, P.R.China}
\email{lichuanzhong@sdust.edu.cn}

\author[Bao Shou]{Bao Shou}
\address[Bao Shou]{$^2$ Center  of Mathematical  Sciences,
Zhejiang University,
Hangzhou 310027, China}
\email{bsoul@zju.edu.cn}

\subjclass[2010]{05E10}
\keywords{Young tableaux, symbol invariant, fingerprint invariant, surface operator, Springer correspondence, Kazhdan Lusztig map}
\date{}

\maketitle

\begin{abstract}
Lusztig used the symbol invariant to describe the Springer correspondence for classical groups. Similarly, the fingerprint invariant can describe the Kazhdan-Lusztig map. Both invariants pertain to rigid semisimple operators labeled by pairs of partitions $(\lambda', \lambda'')$.
It is conjectured that the symbol invariant is equivalent to the fingerprint invariant for rigid surface operators. In this study, we provide a proof of this conjecture.

We classify the maps that preserve the fingerprint invariant and demonstrate that they also preserve the symbol invariant. Conversely, we classify the maps that preserve the symbol invariant and show that they also preserve the fingerprint invariant. The constructions of the symbol and fingerprint invariants in prior works are crucial to the proof.

Additionally, we found that one condition in the definition of the fingerprint invariant is redundant for rigid surface operators. In the appendix, we describe  an alternative strategy to prove the equivalence of these invariants.
\end{abstract}

\tableofcontents

\newpage
\section{Introduction}
Surface operators are two-dimensional defects in four-dimensional gauge theory, which are generalizations of  line operators such as Wilson and 't Hooft operators.
Gukov and Witten introduced surface operators in their study of the ramified case of the Geometric Langlands Program, which is realized through $\mathcal{N}=4$ super Yang-Mills theories \cite{GW06}.
$S$-duality for  surface operators is studied   in \cite{Wit07}\cite{Wy09}\cite{Montonen:1977}
$$S: \; (G, \tau) \rightarrow  ( G^{L}, - 1 / n_{\mathfrak{g}} \tau)$$
with $n_{\mathfrak{g}}$ be 2 for $F_4$,  3 for $G_2$, and 1 for other semisimple classical groups. And  $\tau $ is a complexified  gauge coupling constant \cite{GW06}.
 We will focus on surface operators in  theories with gauge groups $\SO(2n)$ and $\Sp(2n)$,  whose Langlands dual group are $\SO(2n)$ and  $\SO(2n+1)$, respectively. Other gauge groups are trivial or more complicated.

In \cite{GW08}, Gukov and Witten extended their previous study of surface operators \cite{GW06}.  They identified a subclass of surface operators called {\it 'rigid'} surface operators, which are  anticipated  to be related to each other under $S$-duality \cite{GW08}\cite{Wy09}.
There are two types of rigid surface operators:  unipotent and semisimple. The rigid semisimple surface operators  are characterized  by pairs of  partitions $(\lambda^{'}, \lambda^{''})$. The partition is given by
$$\lambda_1^{n_1}\lambda_{2}^{n_{2}}\cdots\lambda_m^{n_m},\quad\quad\lambda_i \in \mathbb{N}$$
with $\la_1\ge \la_2 \ge \cdots \ge \la_m$ as shown in Fig.(\ref{lm}).
\begin{figure}[!ht]
  \begin{center}
    \includegraphics[width=2.5in]{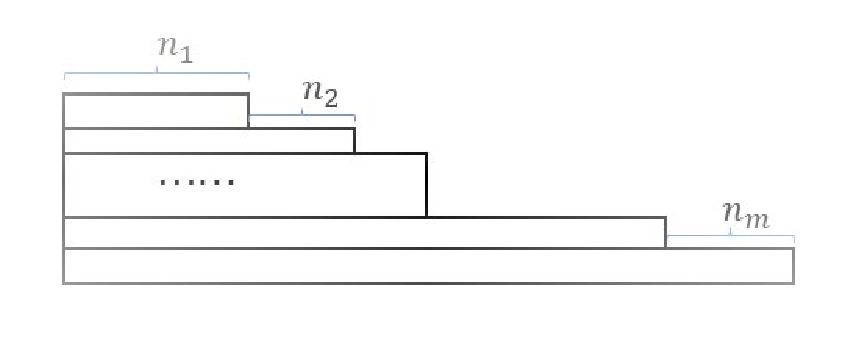}
  \end{center}
  \caption{ Partition  $\lambda_1^{n_1}\lambda_{2}^{n_{2}}\cdots\lambda_m^{n_m}$ with  length $l=\Sigma_i n_i$ .  }
  \label{lm}
\end{figure}
Unipotent rigid surface operators arise  when one partition of $(\lambda^{'}, \lambda^{''})$  is empty.

The invariant known as the {\it symbol} is based on the Springer correspondence \cite{CM93},  which is a map from the rigid semisimple surface operators to the unitary representation of the Weyl group.
Another invariant, termed the   {\it fingerprint } related to the Kazhdan-Lusztig map for the classical groups \cite{Lusztig:1984}\cite{Symbol 2}.
It   is a map from the rigid semisimple surface operators to the set of conjugacy classes of the Weyl group \cite{Spaltenstein:1992}. The rigid semisimple conjugacy classes(rigid surface operators) and  the conjugacy classes of the Weyl group are represented  by pairs of partitions $(\lambda^{'};\lambda^{''})$   and pairs of partitions  $[\alpha;\beta]$, respectively.
The symbol invariant  $\sigma$  also consists  of two partitions.
Thus the symbol invariant is a map from $(\lambda^{'};\lambda^{''})$  to $[\alpha;\beta]$.
Similarly, the fingerprint invariant  is  a map from  $(\lambda^{'};\lambda^{''})$ to  $\sigma$.

In \cite{Wy09}, Wyllard proposed  more explicit proposals for the action  of  $S$-duality maps  acting  on rigid surface operators,  utilizing  symbol invariant.   In \cite{Shou-sc}, a construction  of  the symbol invariant is provided.
This  construction makes the symbol invariant  to be calculated simply  and     convenient   to  find the $S$-duality maps of rigid surface operators.
The foundational properties of the fingerprint invariant and its construction are detailed in \cite{SW17-2}.
  A discrepancy in the total number of rigid surface operators between the $B_n$ and $C_n$
  theories, initially noted in \cite{GW08}\cite{Wy09}, was addressed using the symbol invariant. This approach allowed for the construction and classification of all problematic surface operators in the $B_n/C_n$
  theories \cite{rso}.

It has been verified that the symbol invariant of a rigid surface operator contains the same amount of information as the fingerprint invariant, as shown in Appendix B,  leading to the following conjecture \cite{Wy09}:
\begin{conjecture}
 The symbol invariant is equivalent to the  fingerprint invariant for rigid surface operators.
\end{conjecture}

In this study, we provide proof of this conjecture, as illustrated in Fig.(\ref{conjecture}).
\begin{figure}[!ht]
  \begin{center}
    \includegraphics[width=3in]{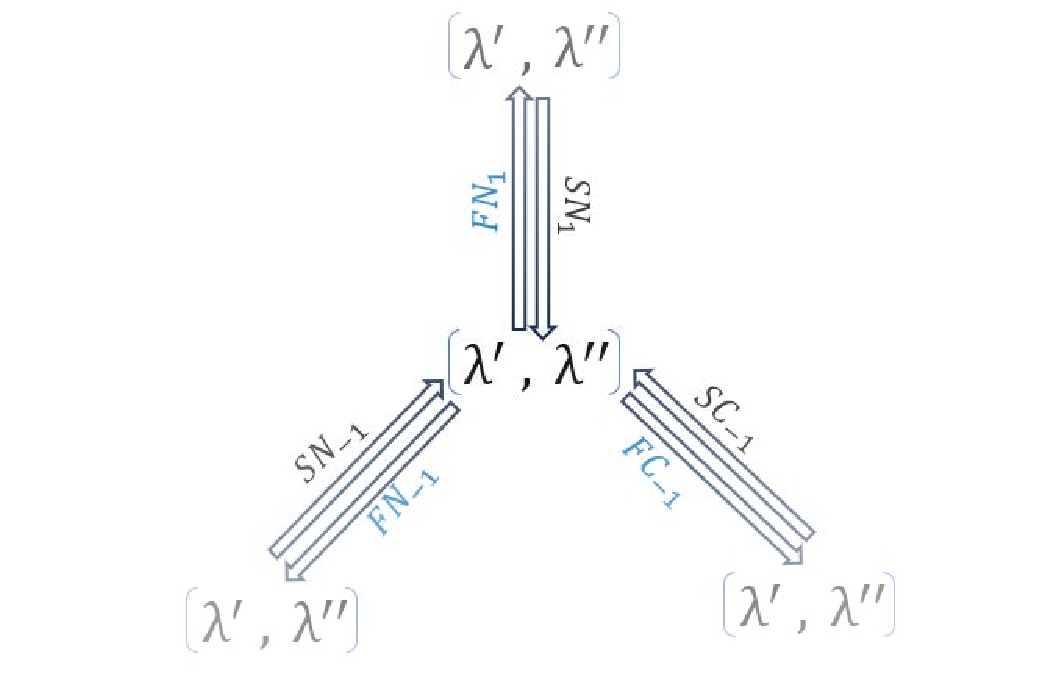}
  \end{center}
  \caption{Classification of invariants preserving maps: $FN_{1}$, $FN_{-1}$ and $FC_{-1}$ are fingerprint preserving maps.  $SN_{1}$, $SN_{-1}$ and $SC_{-1}$ are  symbol preserving maps. The four $(\lambda^{'};\lambda^{''})$ are different  rigid surface operators with the same invariants. }
  \label{conjecture}
\end{figure}
$FN_{1}$, $FN_{-1}$ and $FC_{-1}$ are the classification of the fingerprint preserving maps, we prove that they preserve the symbol invariant.
Here, \textit{map} (\textit{algorithm} or \textit{operation}) refers to a procedure that transforms one rigid surface operator into another.
Conversely,  $SN_{1}$, $SN_{-1}$ and $SC_{-1}$ are the classification of the symbol preserving maps, we demonstrate that they also preserve the fingerprint invariant.
The constructions of the symbol and the fingerprint in previous works play significant roles in proving the conjecture.

The following is an outline of  this article.  In Section \ref{ssymbol}, we  introduce  basic results related to rigid partitions and  the rigid  surface operators  as a  foundation.
In Section \ref{in}, we introduce  the definitions  of symbol invariant and fingerprint invariant in \cite{Wy09}\cite{Shou-sc}.
We also introduce the construction of symbol in \cite{Shou-sc} which is the basics of this study, as well as that of  the fingerprint invariant in \cite{SW17-2}.
In Section \ref{example},   we introduce examples of  maps preserving symbol and prove they preserve fingerprint, offering insights and hints for the proof of the conjecture.
We first prove the conjecture for $B_n$ theory and  restrict  the adjoint rows of the partition $\lambda=\lambda^{'}+\lambda^{''}$ to satisfy the \textit{rigid constraint}.
Under this constraint, the $C2$ condition in the definition of the fingerprint invariant is unnecessary.

In Section \ref{mapf}, we classify the fingerprint invariant preserving maps, which are related to the conditions in the definition of the fingerprint.
We then prove that these maps also preserve the symbol invariant.
Conversely, in Section \ref{maps}, we classify the symbol invariant preserving maps based on previous work on the construction of the symbol invariant \cite{Shou-sc}\cite{rso}. And then we prove they preserve the fingerprint  invariant.
In Section \ref{equ}, it is amazing to find that  the $C2$  condition  can be omitted for rigid surface operators. Thus, the complete proof of the conjecture for rigid surface operators in $B_n$
  theory can be directly obtained and generalized to $C_n$
  and $D_n$
  theories with minor modifications.
In Section \ref{summary},  we summarize the proof and mention potential applications in future research.

The paper includes two appendices.
Appendix A describes another strategy to prove the conjecture based on a different classification of symbol preserving maps.
Appendix B presents the rigid surface operators in the SO(11) and Sp(10) theories, which can be used to verify the proof.

\section{Partitions and Rigid Surface Operator}\label{ssymbol}
In this section, we provide the necessary background on surface operators as a foundation.   We closely  following  \cite{Wy09} to which we refer the reader for more details.

The  bosonic fields of $\mathcal{N}=4$ super-Yang-Mills theory: a gauge field as 1-form, $A_\mu$ ($\mu=0,1,2,3$), six real scalars, $\phi_I$ ($I=1,\ldots,6$), which  take values in the adjoint representation of the gauge group $G$.
Let the surface  $D$ be located  at  $x^0=0$  and $x^1=0$. Surface operators are supported on $D$ with  a certain singularity structure of  fields near the surface.
Since the fields satisfy the half BPS condition,
 the combinations $A=A_2\, \D x^2 +A_3\, \D x^3$ and $\phi = \phi_2 \,\D x^2 + \phi_3 \,\D x^3$ must satisfy  Hitchin's equations \cite{GW08}
\begin{equation}\label{hitch}
 F_A - \phi \wedge \phi = 0,\quad  \D_A\phi =0,\quad \D_A\star A = 0,
\end{equation}
which means a surface operator is defined as   a solution of Hitchin  equations with a prescribed singularity along the surface $D$.

Let $x_2+ix_3 = re^{i\tha}$.  Then the  most general possible rotation-invariant Ansatz for $A$ and $\phi$ is
\begin{eqnarray*}
A &=& a(r) \, \D \tha \,, \nonumber \\
\phi &=& -c(r) \, \D \tha + b(r) \frac{\D r }{r}  \,.
\end{eqnarray*}
Substituting this Ansatz into Hitchin's equations (\ref{hitch}) and defining $s = -\ln r$ ,  Hitchin  equations (\ref{hitch}) become Nahm's equations
\begin{eqnarray*} \label{nahm}
\frac{\D a}{\D s} &=& [b,c]\,, \nonumber \\
\frac{\D b}{\D s} &=& [c,a] \,,\\
\frac{\D c}{\D s} &=& [a,b]. \, \nonumber
\end{eqnarray*}
The surface operators, along   with   the communication relations for the  constants $a$, $b$ and $c$,  were discussed  in \cite{GW06}.
Another conformally invariant surface operator solution is given by \cite{GW08}:
\begin{equation*} \label{nahmshou}
a = \frac{t_x}{s + 1/f}\,,\qquad b = \frac{t_z}{s + 1/f}\,,\qquad c = \frac{t_y}{s + 1/f} \,
\end{equation*}
with $[t_x,t_y]=\varepsilon_{xyz}t_z $.
 $t_x,t_y$ and $t_z$  span a reducible representation  of   $\mathfrak{su}(2)$, which also belong to   the adjoint representation of the gauge group.

 Alternatively, the surface operators can be characterized  through the complexified conjugacy class of the monodromy
\begin{equation*}
U = P \exp(\oint \mathcal{A}) \,,
\end{equation*}
where $\mathcal{A} = A + i \phi$. The integration contour  is  a constant  circle near $r=0$.  From Hitchin's equations  (\ref{hitch}), it follows that   $\mathcal{F} = \D \mathcal{A} + \mathcal{A}\wedge \mathcal{A}=0$, which means that $U$ is independent of   deformations of the integration contour.
According to formula (\ref{nahmshou}), $U$ can be expressed as
\begin{equation} \label{Uplusshou}
U= P \exp(\frac{2\pi}{s+1/f} \,\,  t_+ ) \,,
\end{equation}
where $t_+\equiv t_x +i t_y$ is nilpotent,  corresponding to  unipotent surface operator.
When considering   a semisimple element $S$, the general monodromy of   a surface operator is
$$V=SU.$$
Both the unipotent and  semisimple classes    lead to  surface operators.
The field  $\Psi(r,\tha)$ associated  a   surface operator is required to satisfy  the following constraint  near the surface $D$ \cite{GW08}
\begin{equation} \label{Sun}
S \Psi(r,\tha) S^{-1} = \Psi(r,\tha+2\pi), \,
\end{equation}
which breaks  the gauge group to the centralizer of the seimisimple element $S$. Therefore   surface operators correspond to the classification of unipotent and semisimple conjugacy classes, which  have  been solved by mathematicians \cite{CM93}.

 Rigid surface operators are  closed on the $S$-duality and form  a subset of the surface operators constructed from conjugacy classes. 
 A unipotent conjugacy class is called rigid\footnote{The rigid surface operators  here  correspond to   strongly rigid operators in \cite{Wy09}. } if its dimension is strictly smaller than that of any nearby orbit.
 Similarly,  a semisimple conjugacy class $S$  is called rigid if the centralizer of such class is larger than that of any nearby class. In summary, rigid surface operators correspond to  monodromies of the form $V=SU$, where   $U$ is unipotent and rigid and $S$ is semisimple and rigid.
All rigid orbits have been classified \cite{GW08}.

   For  the rigid surface operators in  $B_n$($\SO(2n{+}1)$), $C_n$($\Sp(2n)$) and $D_n$($\SO(2n)$) theories,
the element $t_+$ in  Eq.(\ref{Uplusshou}) can be described in  block-diagonal basis  as follows
 \begin{equation}\label{ti}
 t_+ = \left( \begin{array}{ccc} t_+^{n_1}  & & \\
                        & \ddots &   \\
                        & & t_+^{n_l}
 \end{array} \right ),
 \end{equation}
where $t_+^{n_k}$ is the `raising' generator of the $n_k$-dimensional irreducible representation of $\su(2)$. There are restrictions on the allowed dimensions of the $\su(2)$ irreps.  The $t_i$'s  must should also belong to the adjoint representation of  the gauge group.
From the decomposition (\ref{ti}), unipotent (nilpotent) surface operators  are classified by the restricted partitions
 $$n_1\ge n_2 \ge \cdots \ge n_l$$
 with $\sum_{i=1}^l n_i = n$.
 Partitions correspond  one-to-one correspondence  with Young tableaux.  For instance,  the partition $3^22^31^3$  corresponds to
\be
\tableau{2 5 8}\non
\ee
Young tableaux find applications  in various branches  of mathematics and physics \cite{Localization}. They  are utilized in constructing   the eigenstates of the Hamiltonian System in  AGT correspondence \cite{Sh11}.

The classification of nilpotent orbits in terms of restricted partitions is as follows \cite{CM93}:
\begin{itemize}
  \item $(B_n)$: partitions of $2n+1$, $\sum \lambda_i=2n+1$, with all even part $\lambda_i$ appearing an even number of times.
  \item $(D_n)$: partitions of $2n$, $\sum \lambda_i=2n$, with all even part $\lambda_i$ appearing an even number of times.
  \item $(C_n)$: partitions of $2n$, $\sum \lambda_i=2n+1$, with all odd part $\lambda_i$ appearing an even number of times.
\end{itemize}
A partition  is called {\it rigid} if it satisfies the following conditions:
\begin{Def}\label{rigid}(\textbf{Rigid condition})
  \begin{enumerate}
  \item  No gaps i.e.~$\lambda_i-\lambda_{i+1}\leq1$ for all $i$.
  \item No odd (even) part appears exactly twice  in the $B_n$ or $D_n$($C_n$) theories.
\end{enumerate}
\end{Def}



We will focus on rigid partitions,  corresponding  to   rigid operators.
The following properties are crucial to this study, which are easily derivable from definitions \cite{Wy09}.
\begin{prop}{\label{Pb}}
For a rigid $B_n$ partition,
the longest row always contains an odd number of boxes. The two rows following the first row are either both of odd length or both of even length, continuing this pairwise pattern. If the Young tableau has an even number of rows, the shortest row must be of even length.
\end{prop}

\begin{prop}{\label{Pd}}
For a rigid $D_n$ partition, the longest row always contains an even number of boxes. The two rows following the first row are either both of even length or both of odd length, continuing this pairwise pattern. If the Young tableau has an even number of rows, the shortest row must be of even length.
\end{prop}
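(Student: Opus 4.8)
The plan is to reduce Proposition~\ref{Pd} to Proposition~\ref{Pb} for everything except the parity of the longest row, and to obtain that one parity statement from a short counting argument that exploits the total being $2n$. The point is that a rigid $D_n$ partition and a rigid $B_n$ partition are governed by the same defining data, except for the total: even integers occur an even number of times, there are no gaps, and no odd integer occurs exactly twice. The three conditions that do not mention the total are literally identical in the two theories. Consequently the two assertions that Propositions~\ref{Pb} and~\ref{Pd} share verbatim---that rows $2,3$, then $4,5$, and so on occur in pairs of equal parity, and that the shortest row is even whenever the number of rows is even---can be established by exactly the argument used for Proposition~\ref{Pb}, with no change. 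First I would group the rows into maximal blocks of equal length, note that the no-gap condition makes the block lengths run through consecutive integers so that successive blocks alternate in parity, and then use the multiplicity constraints to control the parities of the partial row-counts at the block boundaries; this is precisely the content inherited from Proposition~\ref{Pb}.

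The only genuinely new point is the parity of the longest row, and here I would argue directly from the shared structure together with the evenness of the total. Assume the pairwise pattern and the shortest-row statement already in hand, and let $l$ be the number of rows. If $l$ is odd, then rows $2,\dots,l$ split into the complete pairs $(\lambda_2,\lambda_3),(\lambda_4,\lambda_5),\dots,(\lambda_{l-1},\lambda_l)$; each pair consists of two rows of equal parity and so contributes an even amount, whence $\sum_{i=2}^{l}\lambda_i$ is even. If $l$ is even, the same pairing exhausts rows $2,\dots,l-1$ and leaves the shortest row $\lambda_l$ unpaired, and by the shortest-row statement $\lambda_l$ is even, so again $\sum_{i=2}^{l}\lambda_i$ is even. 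In either case
\[
\lambda_1 = 2n - \sum_{i=2}^{l}\lambda_i \equiv 2n \equiv 0 \pmod 2 ,
\]
so the longest row has even length. The analogous computation with total $2n+1$ is exactly what forces the longest row to be odd in Proposition~\ref{Pb}, so the two longest-row statements are two faces of the same parity count.

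The hard part is not this final count but the inherited step: checking that the multiplicity constraints really do force every block boundary to fall after an odd-indexed row, which is what makes the pairwise pattern exact. The delicate cases will be the first block (the one containing the longest row) and the last block (containing the shortest row), where the even-multiplicity condition on even parts and the ``not exactly twice'' condition on odd parts have to be played against each other to pin down the parity of the partial counts; the interior blocks then follow by propagating along the chain of consecutive lengths. Since this bookkeeping is identical to the one behind Proposition~\ref{Pb} and uses only the conditions common to the $B_n$ and $D_n$ theories, I would present it once and invoke it here, so that the proof of Proposition~\ref{Pd} amounts to that citation together with the short even-total computation displayed above.
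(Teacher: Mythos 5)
Your closing parity count is sound, and it correctly isolates the only place where the total ($2n$ versus $2n+1$) enters: granting the pairwise pattern and the shortest-row statement, pairing rows $2,3$, then $4,5$, and so on, does force $\lambda_1\equiv 2n \pmod 2$, and the same count with $2n+1$ gives Proposition~\ref{Pb}. But note that the paper contains no proof of Propositions~\ref{Pb}--\ref{Pc}; they are quoted as background facts (from the literature on rigid orbits), so the step you propose to ``inherit'' does not exist anywhere in the paper --- the entire burden of the pairwise pattern and the shortest-row statement falls on you.

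That inherited step is where the proposal breaks. As you set it up --- rows equal to the parts $\lambda_i$, maximal blocks of equal parts, block boundaries forced by the multiplicity constraints to fall after odd-indexed rows --- the claim is false, because under that reading the propositions themselves are false. Take the rigid $D_4$ partition $2^2 1^4$ (the even part $2$ has even multiplicity, there are no gaps, and the odd part $1$ occurs four times, not twice): its rows read $2,2,1,1,1,1$, so rows $2$ and $3$ have opposite parities and the shortest row is odd although the number of rows is six. Likewise $2^2 1^3$ is a rigid $B_3$ partition whose longest row would be even, contradicting Proposition~\ref{Pb} read the same way. No bookkeeping with the conditions ``even parts have even multiplicity'' and ``no odd part occurs exactly twice'' can establish a statement these examples violate. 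The propositions are true only when the Young tableau is the transpose of the partition, i.e.\ the parts $\lambda_i$ are column lengths, so that the $j$-th row has length $R_j=\#\{i:\lambda_i\ge j\}$. In that picture the hard part evaporates: $R_{2k}-R_{2k+1}$ equals the multiplicity of the even integer $2k$, hence is even (the pairwise pattern); if the number of rows $\lambda_1$ is even, the shortest row $R_{\lambda_1}$ equals the multiplicity of the even integer $\lambda_1$, hence is even; and the longest row $R_1$ is the number of parts, whose parity equals the parity of $\sum_i\lambda_i$ because even parts contribute evenly --- giving $R_1$ even for total $2n$ and odd for $2n+1$. Your final computation is a correct alternative route to this last point, but the decomposition it rests on must be proved in the transposed picture, not the one you describe.
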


\begin{prop}{\label{Pc}}
For a rigid $C_n$ partition, the two longest rows both contain either an even or an odd number of boxes, continuing this pairwise pattern. If the Young tableau has an odd number of rows, the shortest row must contain an even number of boxes.
\end{prop}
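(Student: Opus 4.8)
The plan is to recast the statement, which concerns the rows of the Young tableau attached to $\la$, as a statement about the multiplicities of the parts of $\la$, and then to read off the parities directly from the defining condition that every odd integer occurs an even number of times. Write $m_j$ for the number of parts of $\la$ equal to $j$, and let $\mu_1\ge\mu_2\ge\cdots\ge\mu_r$ be the lengths of the successive rows of the tableau, that is, the parts of the conjugate partition of $\la$. Two elementary facts are recorded first: the telescoping identity $\mu_j-\mu_{j+1}=m_j$ (the $j$-th row exceeds the $(j{+}1)$-th by exactly the number of parts of $\la$ of size $j$), and the fact that the number of rows equals $r=\la_1$. Everything then reduces to tracking the parities of the $\mu_j$.

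For the pairwise pattern I group the rows as $(\mu_1,\mu_2),(\mu_3,\mu_4),\dots$ and compute the within-pair difference $\mu_{2k-1}-\mu_{2k}=m_{2k-1}$. Since $2k-1$ is odd, the membership condition forces $m_{2k-1}$ to be even, so $\mu_{2k-1}\equiv\mu_{2k}\pmod 2$. Thus the two longest rows share a parity (the first pair has gap $m_1$, the multiplicity of the odd part $1$), and the same holds for every later pair, which is precisely the asserted pattern. It is worth noting that the gaps $m_{2k}$ between consecutive pairs are unconstrained, which is exactly why the shared parity is allowed to change from one pair to the next.

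For the last clause I treat the case $r=\la_1$ odd, where the pairing leaves the shortest row $\mu_r$ unmatched. Here $\mu_r=\mu_{\la_1}=m_{\la_1}$ is the multiplicity of the largest part of $\la$; since $\la_1$ is odd this multiplicity is even, so the shortest row has even length, as required.

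The substance of the argument is thus a short parity computation, and the only place demanding care is the bookkeeping: fixing the convention that identifies the rows of the tableau with the conjugate of $\la$, and checking that the pairing is anchored at the top so that the within-pair gaps are the multiplicities $m_1,m_3,m_5,\dots$ of the \emph{odd} parts. This anchoring is what separates the present statement from Propositions \ref{Pb} and \ref{Pd}: there the parity constraint sits on the even parts, the pairing is anchored one row lower, and consequently the extremal (longest and shortest) rows obey the different rules stated in those propositions. Once the alignment is fixed the computation is immediate, and in particular only membership in the $C_n$ family, and not the finer rigidity hypotheses, is needed for the parity conclusions.
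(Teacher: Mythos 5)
Your argument is correct, but there is nothing in the paper to compare it against: Proposition \ref{Pc}, like Propositions \ref{Pb} and \ref{Pd}, is stated as a known fact about rigid partitions (cf.\ \cite{Wy09}) and is never proved in the paper, so your proposal supplies a proof where the paper has none. Three points deserve emphasis. First, the conventional issue you flag is genuinely the crux: the statement is true only if the ``rows of the Young tableau'' are the parts of the conjugate partition $\mu=\lambda^{T}$ (equivalently, the parts $\lambda_i$ are the columns). Under the naive reading in which rows are the parts themselves the claim fails: the rigid $C_2$ partition $(2,1,1)$ has longest two parts $2,1$ of opposite parity, whereas its conjugate $(3,1)$ exhibits exactly the claimed pattern; likewise $(2,2,1)$ would contradict Proposition \ref{Pb}. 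So your choice of convention is not optional bookkeeping but is forced by the truth of the statement, and it is the reading consistent with the parity patterns asserted in all three propositions. Second, granting that convention, your computation is complete and correct: the within-pair differences $\mu_{2k-1}-\mu_{2k}=m_{2k-1}$ are multiplicities of odd integers, hence even by the defining condition of a $C_n$ partition, so each pair $(\mu_{2k-1},\mu_{2k})$ shares a parity; and when the number of rows $r=\lambda_1$ is odd, the unpaired shortest row $\mu_r=m_{\lambda_1}$ is again the multiplicity of an odd integer, hence even. Third, your closing observation that only membership in the $C_n$ family is used---the rigidity hypotheses (no gaps, no even part occurring exactly twice) play no role---is accurate, and the analogous statement holds for Propositions \ref{Pb} and \ref{Pd}, where the only further input is that the parity of the total $2n{+}1$ (resp.\ $2n$) fixes the parity of the number of parts, i.e.\ of the longest row.
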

\begin{flushleft}
Examples of partitions in the $B_n$,  $D_n$, and $C_n$ theories are shown in Fig.(\ref{bdcd}):
\end{flushleft}
\begin{figure}[!ht]
  \begin{center}
    \includegraphics[width=4.8in]{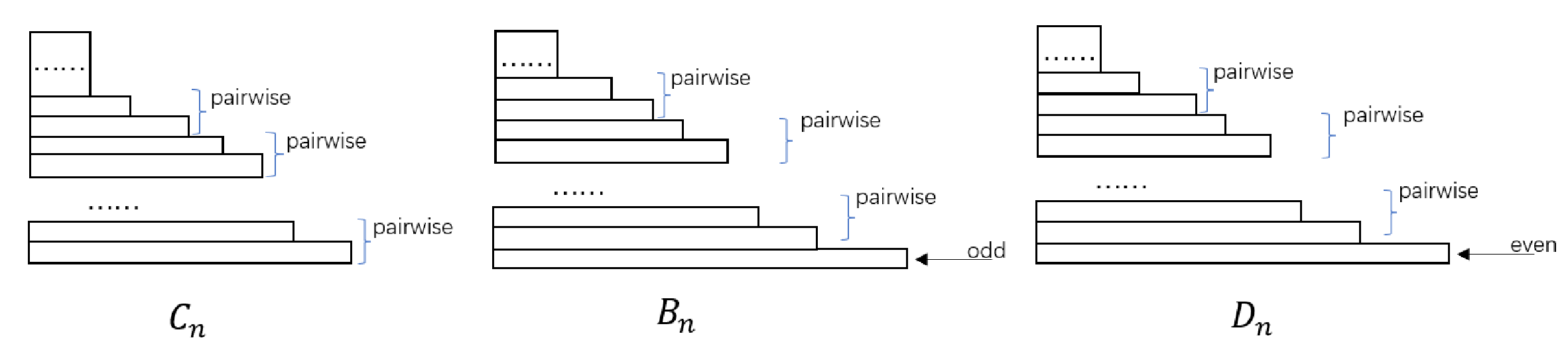}
  \end{center}
  \caption{ Partitions in the $C_n$,  $B_n$, and $D_n$  theories. }
  \label{bdcd}
\end{figure}

From Fig.(\ref{bdcd}), the following observations are frequently used in this study:
\begin{lem}{\label{lbcd}}
\begin{enumerate}
  \item There is an odd number of \textit{rows} under the bottom row of a  pairwise rows of  the partitions in the $B_n$ and $D_n$ theories.
  \item There is an even number of boxes above the top row of  a pairwise rows of  the partitions in the $B_n$ and $D_n$ theories.
  \item There is an even number of \textit{rows} under the bottom row of a  pairwise rows of  the partitions in the $C_n$ theory.
  \item There is   an  even  number of boxes above the top row of a  pairwise rows of  the partitions in the $C_n$ theory.
\end{enumerate}
\end{lem}

At the end of this section, the following conventions are introduced:
\begin{itemize}
  \item  For the rigid surface operator $(\lambda^{'}, \lambda^{''})$, the partition $\lambda$ is always given by
$$\lambda=\lambda^{'}+\lambda^{''},$$
where the  addition  of  partitions  is defined by the additions of each part
\begin{equation}\label{ap}
  \lambda_i+\mu_i.
\end{equation}
  \item $'Operator'$ or  $'rigid\,\, semisimple \,\,surface\,\, operator'$  will refer to the rigid  surface operator in the rest of the paper.
  \item For  the rigid surface operator $(\lambda^{'}, \lambda^{''})$ in $B_n$ theory, the first partition $\lambda^{'}$ is in $B_n$ theory and the second partition $\lambda^{''}$ is in $D_n$ theory.
  \item An even row  means the length of the row is even.
  \item  An even pairwise rows means a pair of even rows.
  \item Rows of a partition  are indexed from  bottom to top.
  \item Without causing confusion, a single row will be denoted with the same notation, such as $t$ and $b$, regardless of its length, the partition it belongs to, or its position in a pair of rows.
  \item Without causing confusion, the image of rigid  surface  operator  $(\lambda^{'}, \lambda^{''})$ will also be denoted as $(\lambda^{'}, \lambda^{''})$ under the symbol preserving such as the ones in Fig.(\ref{conjecture}).
\end{itemize}


\section{Invariants of Rigid Surface Operator}\label{in}
In this section, we introduce the definitions of the symbol invariant and fingerprint invariant of rigid surface operators, along with their respective constructions \cite{Shou-sc, Wy09, SW17-2}.

Invariants of rigid surface operators $(\lambda';\lambda'')$, such as the fingerprint invariant, symbol invariant, and dimension, remain unchanged under the $S$-duality map \cite{GW08, Wy09}.
The simplest invariant of rigid surface operators is the dimension $d$, given by the following formulas \cite{CM93, GW08}:
\begin{eqnarray*}
B_n: & d = 2n^2 + n -\half \sum_{k} (s_k')^2 -  \half \sum_{k} (s_k'')^2
+ \half \sum_{k\;\mathrm{odd}} r_k'+ \half \sum_{k\;\mathrm{odd}} r_k'' \,,\nonumber \\
D_n: & d =2n^2 - n -\half \sum_{k} (s_k')^2 -  \half \sum_{k} (s_k'')^2
+ \half \sum_{k\;\mathrm{odd}} r_k'+ \half \sum_{k\;\mathrm{odd}} r_k''\,, \\
C_n: & d = 2n^2 + n -\half \sum_{k} (s_k')^2 -  \half \sum_{k} (s_k'')^2
- \half \sum_{k\;\mathrm{odd}} r_k'- \half \sum_{k\;\mathrm{odd}} r_k''\,, \nonumber
\end{eqnarray*}
where $s'_k$ denotes the number of parts of $\lambda'$'s that are greater than or equal to $k$. And $r_k'$ denotes the number of parts of $\lambda'$ equal to $k$. Similarly,  $s_k''$ and $r_k''$   correspond  to the other partition $\lambda''$ of the rigid surface operator.

Other discrete quantum numbers, such as `centre' and `topology', are interchanged under $S$-duality \cite{GW08}.
The fingerprint invariant and the symbol invariant are finer than the dimension invariant $d$ and the discrete quantum numbers.
\subsection{Symbol Invariant of Partitions}
The symbol invariant is based on the Springer correspondence extended to rigid semisimple conjugacy classes $(\lambda';\lambda'')$
  \cite{Wy09}. In \cite{Shou-sc}, an equivalent definition of the symbol invariant for the $C_n$
  and $D_n$
  theories was presented, ensuring consistency with the $B_n$
  theory.
\begin{Def}\label{D2}\cite{Shou-sc}(\textbf{Symbol invariant})
 The  symbol  of a partition $\lambda$  is calculated  as follows.
\begin{itemize}
  \item  $B_n$ theory:
  \begin{enumerate}
    \item Add $l-k$  to the $k$th part of the partition.
    \item Arrange the odd parts and the even parts  of the sequence $l-k+\lambda_k$   in an increasing sequence $2f_i+1$ and in an increasing sequence $2g_i$, respectively.
    \item  Calculate the terms
 \begin{equation*}
   \al_i = f_i-i+1,\quad\quad \bet_i = g_i-i+1.
 \end{equation*}
    \item The {\it symbol} invariant  is written as follows
\begin{equation*}
  \sigma(\lambda)=\left(\ba{@{}c@{}c@{}c@{}c@{}c@{}c@{}c@{}} \al_1 &&\al_2&&\al_3&& \cdots \\ &\bet_1 && \bet_2 && \cdots  & \ea \right).
\end{equation*}
  \end{enumerate}
  \item  $C_n$ theory: There are two cases.
                               \begin{description}
                                \item[1]If the length of partition is even,  compute the symbol as that  in the $B_n$ case, and then append an extra $'0'$ on the left of the top row of the symbol.
                                \item[2]  If the length of the partition is odd, first append an extra $'0'$ as the last part of the partition. Then compute the symbol as that in the $B_n$ case. Finally,   delete a $'0'$ which is in the first entry of the bottom row of the symbol.
                              \end{description}
   \item  $D_n$ theory: first append an extra $'0'$ as the last part of the partition, and then compute the symbol as in the $B_n$ case. Finally,  delete two 0's which are  in the first two entries of the bottom row of the symbol.
\end{itemize}
\end{Def}
For the symbol $\sigma$ of  rigid semisimple  operators $(\lambda',\lambda'')$, it  is obtained by  adding  the entries that are `in the same place' of symbols of $\lambda'$ and $\lambda''$
\begin{equation}\label{ddddr}
  \sigma((\lambda^{'};\lambda^{''}))=\sigma(\lambda^{'})+\sigma(\lambda^{''}).
\end{equation}
For example,
\begin{equation*} \label{symboladd}
\left(\begin{array}{@{}c@{}c@{}c@{}c@{}c@{}c@{}c@{}c@{}c@{}c@{}c@{}c@{}c@{}} 0&&0&&0&&1&&1&&1&&2 \\ & 1 && 1 && 1 &&1&&1&&2 & \end{array} \right) +
 \left(\begin{array}{@{}c@{}c@{}c@{}c@{}c@{}c@{}c@{}c@{}c@{}c@{}c@{}} 0&&0&&0&&1&&1&&1 \\ & 1 && 1 &&1&&1&&1 & \end{array} \right)=
\left(\begin{array}{@{}c@{}c@{}c@{}c@{}c@{}c@{}c@{}c@{}c@{}c@{}c@{}c@{}c@{}} 0&&0&&0&&1&&2&&2&&3 \\ & 1 && 2 && 2 &&2&&2&&3 & \end{array} \right).
\end{equation*}


Table \ref{newt} summarizes the contribution to the symbols of rows in rigid surface operators concisely \cite{rso}.
\begin{prop}\label{row-eo}
 The contribution to the symbol of the bottom row of  an odd pairwise rows with length   $L$ is the   same as that of  the top row in an even pairwise rows  with length $L+1$.
 The contribution to the symbol of the top row of an odd  pairwise rows with length $L$  is the   same as that of  the bottom  row in an even  pairwise row with length $L-1$.
\end{prop}

Note that   Proposition \ref{row-eo} and Table \ref{newt} are \textit{independent} of the specific theories. This independence implies that the contribution to the symbol of a row is invariant.
In other words, given the contribution to the symbol, there are always a finite number of possible lengths and locations for a row.
\begin{rmk}\label{set33}
The following three sets give the same symbol invariant:
 \begin{enumerate}
    \item Symbol invariant $\sigma((\lambda^{'},\lambda^{''}))$.
   \item Rigid surface operator $(\lambda^{'},\lambda^{''})$.
   \item Length and location of each row in $(\lambda^{'},\lambda^{''})$.
 \end{enumerate}
\end{rmk}

\begin{table}
\begin{tabular}{|c|c|c|c|}\hline
Parity of length of row & Length & Location in a pairwise rows   & Contribution    \\ \hline
odd & $2n+1$ & top & $\Bigg(\!\!\!\ba{c}0 \;\; 0\cdots 0\;\; 0 \cdots 0 \\
\;\;\;0\cdots \underbrace{1 \;\;1\cdots 1}_{n} \ \ea \Bigg)$   \\ \hline
odd & $2n+1$& bottom   & $\Bigg(\!\!\!\ba{c}0 \;\; 0\cdots \overbrace{ 1\;\; 1\cdots1}^{n+1} \\
\;\;\;0\cdots 0\;\; 0\cdots 0 \ \ea \Bigg)$    \\ \hline
even & $2m$ & bottom   &  $\Bigg(\!\!\!\ba{c}0 \;\; 0\cdots 0\;\; 0 \cdots 0 \\
\;\;\;0\cdots \underbrace{1 \;\;1\cdots 1}_{m} \ \ea \Bigg)$  \\ \hline
even & $2m$&top    &   $\Bigg(\!\!\!\ba{c}0 \;\; 0\cdots \overbrace{ 1\;\; 1\cdots1}^{m} \\
\;\;\;0\cdots 0\;\; 0\cdots 0 \ \ea \Bigg)$      \\ \hline
\end{tabular}
\caption{ Contributions  of rows in partitions to  symbol in  the $B_n$, $D_n$, and $C_n$ theories. }
\label{newt}
\end{table}

\subsection{Fingerprint Invariant of Partitions}\label{subsubfinger}
The {\it fingerprint} invariant is derived from rigid surface operators $(\lambda';\lambda'')$
using the Kazhdan-Lusztig map. It consists of a pair of partitions $[\alpha;\beta]$ associated with the Weyl group conjugacy class.
The  fingerprint invariant of  $(\lambda',\lambda'')$  is constructed  as follows \cite{Wy09}.
\begin{Def}\label{finger}(\textbf{Fingerprint invariant})
\begin{enumerate}
  \item First, add the two partitions  $\lambda=\lambda^{'}+\lambda^{''}$, then construct    the partition $\mu$ as follows:
\begin{equation}\label{mu}
\mu_i=Sp(\lambda)_i=
\left\{ \begin{aligned}
         & \lambda_i + p_{\lambda}(i) \quad  \quad   \textrm{if} \quad \lambda_i  \textrm{ is odd  and} \quad \lambda_i \neq \lambda_{i-p_{\lambda}(i)}, \\
         & \lambda_i   \quad  \quad   \quad   \quad  \quad        \textrm{ otherwise},
       \end{aligned} \right.
\end{equation}
where $p_{\lambda}(i)=(-1)^{\sum^i_{k=1} \lambda_k}$.
  \item Next, define the function $\tau$ from an even positive integer $2m$ to $\pm 1$ as follows.
\begin{itemize}
  \item  For $B_n$($D_n$) partitions,
  \begin{itemize}
    \item   $\tau(2m)=-1$, if  at least one $\mu_i$ such that $\mu_i= 2m$ and any of the following three conditions is satisfied.
\begin{eqnarray}\label{con}
  &&(C1), \quad\quad\quad\quad\quad \,\,\mu_i\neq \lambda_i  \non \\
 && (C2),  \quad\quad\quad\sum^{i}_{k=1}\mu_k \neq \sum^{i}_{k=1}\lambda_k\\
&& (C3)_{SO},  \quad\quad\quad \lambda^{'}_{i} \quad\textrm{is odd}.  \non
\end{eqnarray}
    \item Otherwise,  $\tau(2m)= 1$.
  \end{itemize}
  \item For $C_n$ partitions,  the definition is the same except that the condition $(C3)_{SO}$ is  replaced by
$$(C3)_{Sp},\quad\quad \lambda^{'}_{i}  \quad   \textrm{is even}.$$
\end{itemize}
  \item Finally,  construct  a pair of partitions $[\alpha;\beta]$. For each pair of parts of $\mu$ both equal to $a$ with $\tau(a)=1$,   retain one part $a$ as a part of  the partition $\alpha$.
  For each part of $\mu$ of size $2b$ with $\tau(2b)=-1$,  retain $b$ as a part of  the partition $\beta$.
\end{enumerate}
\end{Def}

\begin{rmk}\label{def}
Note that the part $2m=\lambda_i=\lambda^{'}_{i}+\lambda^{''}_{i}$ corresponds to the height of the $2m$th row in $\lambda$.
\begin{itemize}
  \item $`(3)_{SO} \quad \lambda^{'}_{i} \quad\textrm{is odd}'$  is equivalent to  $`(3)_{SO} \quad \lambda^{''}_{i} \quad\textrm{is odd}'$.
  \item $`(3)_{Sp}\quad \lambda^{'}_{i}  \quad   \textrm{is even}'$ is equivalent to $`(3)_{Sp}\quad \lambda^{''}_{i}  \quad   \textrm{is even}'$.
  \item We introduce some  conventions.
\begin{itemize}
  \item  $\mu=\mu(\lambda)$ stands for the map $\mu$ or the image of the map.
  \item $Ci, \, i= 1,2,3$ are the conditions in formula (\ref{con})  in Definition \ref{finger}.
  \item $p_{\lambda}(i)=(-1)^{\sum^i_{k=1} \lambda_k}$.
\end{itemize}
\end{itemize}
\end{rmk}

According to  formula (\ref{mu}),  $\mu_i\neq\lambda_i$  only happens at the end of a row   with   one box   appended or deleted. The following important lemma can be get  from the definition of fingerprint invariant \cite{SW17-2}(they  will be used frequently.).
\begin{lem}\label{ilem}
Under the map $\mu$, the change of $\lambda_i$th row of the partition $\lambda$ is shown in Table \ref{ssst}, which depends on  the parity of its height and  the sign of $(-1)^{p_\lambda(i)}$.
\begin{table}
  \begin{tabular}{|c|c|l|}
  \hline
  Parity of the height of  row & Sign of  $(-1)^{p_\lambda(i)}$   & Change of the end of a row\\\hline
  odd & $-$ & $\mu_i=\lambda_i-1$ \\\hline
  even & $-$ & $\mu_i=\lambda_i+1$ \\\hline
  even & $+$ & $\mu_i=\lambda_i$ \\\hline
  odd & $+$ & $\mu_i=\lambda_i$ \\
  \hline
\end{tabular}
\caption{ Changes of the end  of the rows in  $\lambda$ under map $\mu$.}
\label{ssst}
\end{table}

These results are visualized in Figs.(\ref{OR}) and (\ref{ER}) \footnote{In this paper, the grey box represents an appended box and the black box represents a deleted box.}. The symbol $'\pm'$ indicate  the sign of $(-1)^{p_\lambda(i)}$ corresponding to $i$th part of $\mu$.
\begin{enumerate}
  \item A box is deleted only at the end of rows with odd heights and the factor $(-1)^{p_{\lambda}(i)} = -1$, as illustrated in Fig.(\ref{OR})(a) and (c).
  \item A box is appended only at the end of rows with even heights and the factor $(-1)^{p_{\lambda}(i)} = -1$, as illustrated in Fig.(\ref{ER})(a).
  \item When a box is appended to a row with even height, the row above it must have a box deleted at the end of row, as illustrated in Fig.(\ref{ER})(a).
  \item When a row with an even height remains unchanged, the row above it also remains unchanged, as illustrated in Fig.(\ref{ER})(b).
\end{enumerate}
\end{lem}
  \begin{figure}[!ht]
  \begin{center}
    \includegraphics[width=3.5in]{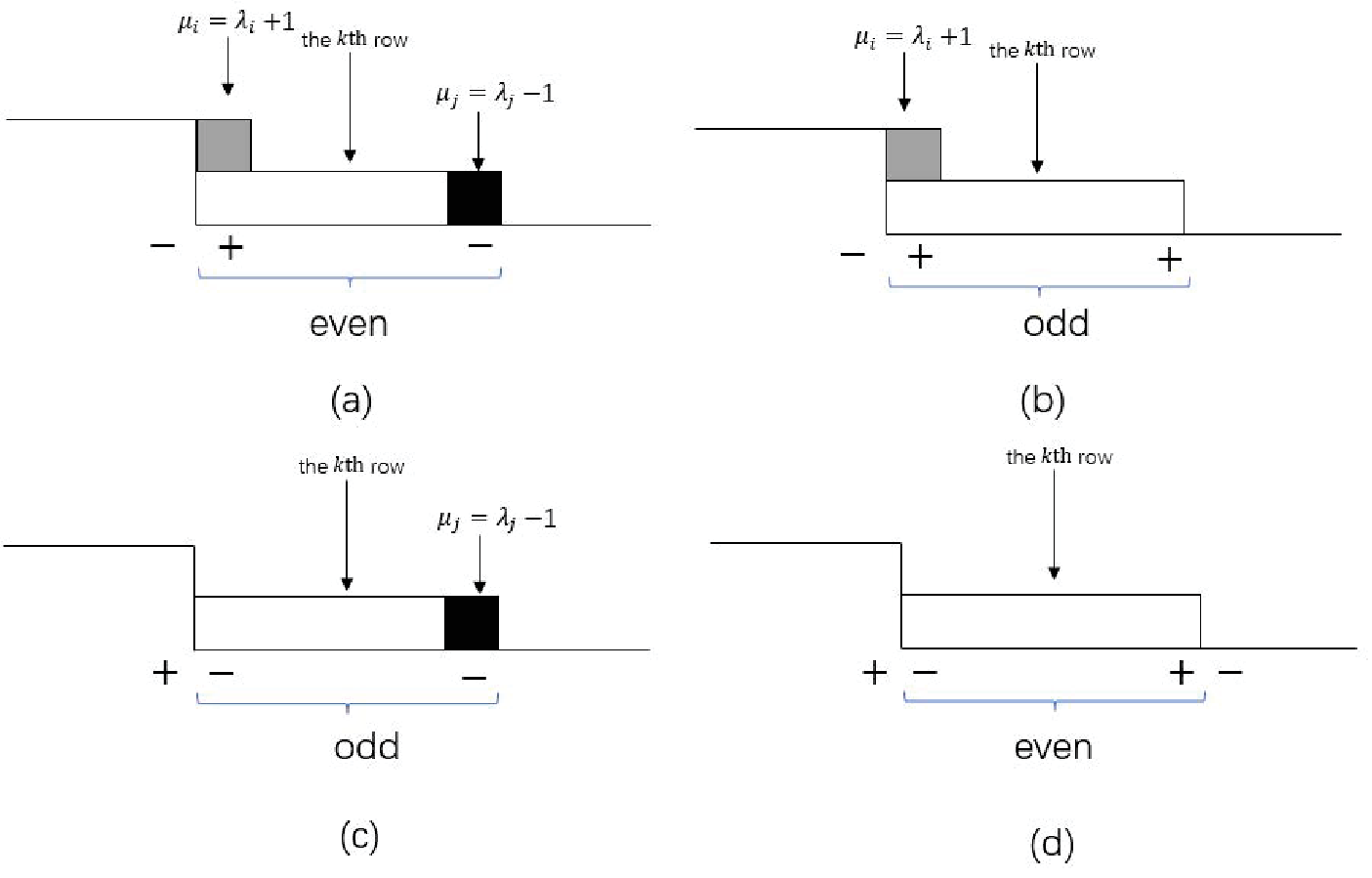}
  \end{center}
  \caption{ Image of  parts $k^{n_k}$($k$ is odd) under the map $\mu$. }
  \label{OR}
\end{figure}
\begin{figure}[!ht]
  \begin{center}
    \includegraphics[width=3.5in]{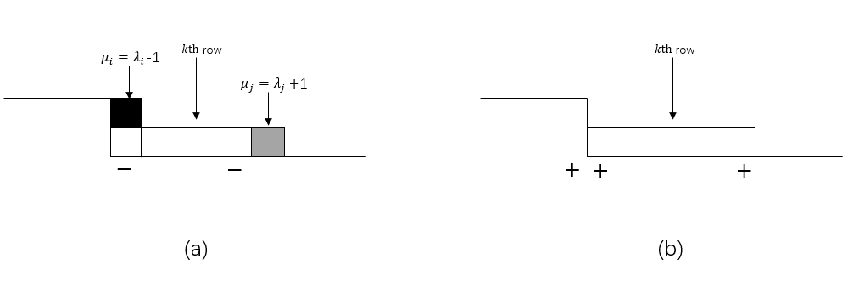}
  \end{center}
  \caption{  Image of parts $k^{n_k}$($k$ is even)under the map $\mu$ . }
  \label{ER}
\end{figure}
\begin{flushleft}
 Lemma \ref{ilem}  is illustrated  by the following example as shown in Fig.(\ref{mumu}),
\end{flushleft}
\begin{ex}
Assume the number of boxes above the $(2k-1)$th row is even. The $2k$th row and  $(2k-1)$th row have different parities, then  a box is deleted  at the end of the $(2k-1)$th row and  a box is appended at the end of the $(2k-2)$th row.
This pattern continues until the    $2i$th row and  $(2i-1)$th row have different  parities.
\begin{figure}
  \centering
  \includegraphics[width=3in]{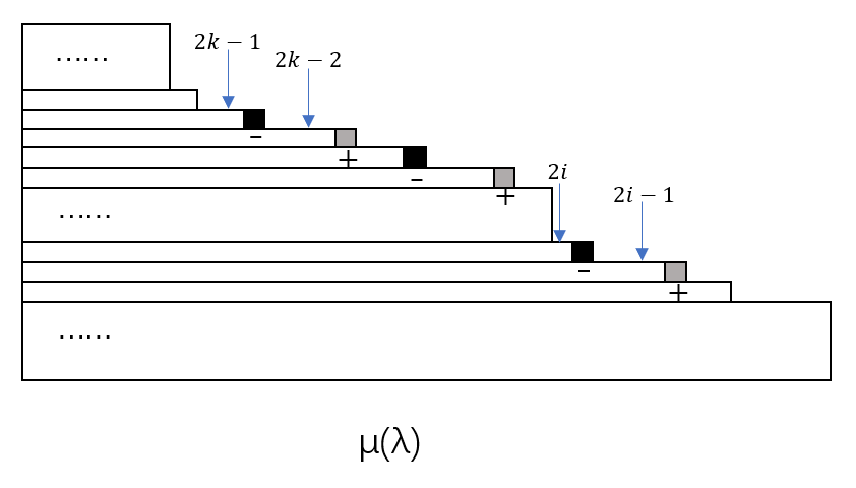}\\
  \caption{$\mu(\lambda)$ is the partition of $\lambda=\lambda^{'}+ \lambda^{''}$ under the map $\mu$. }\label{mumu}
\end{figure}
\end{ex}

From the above lemma, we derive the following facts:
\begin{prop}\label{boxbox}
\begin{enumerate}
  \item   Starting from the highest row, a box is always deleted before a box is appended, as illustrated in Fig.(\ref{mumu}).  
  \item  The deletion of a box must be followed by the appending of a box, ensuring that the number of boxes is balanced, i.e.,
  $$\sum_{k=1}^{i}\mu_k = \sum_{k=1}^{i}\lambda_k$$
at the end of the even height row of $\mu(\lambda)$, as shown in Fig.(\ref{ER}).
  \item Fig.(\ref{OR})(c) is the starting  part of Fig.(\ref{ER})(a). Fig.(\ref{OR})(b) is the ending part of Fig.(\ref{ER})(a).
  \item From Fig.(\ref{ER})(b),  if no change happens to the part $k$ under the map $\mu$, $k$th row of $\lambda$ is equal to the $k$th row of $\mu$.
  \item  From Fig.(\ref{ER}) and  Fig.(\ref{OR}),   for the even and odd rows of $\lambda$, we have
  \begin{equation}\label{mul1}
    length(2m \mathrm{th} \,\mathrm{row} \, \mathrm{of} \,\lambda) \leq length(2m \mathrm{th} \,\mathrm{row} \, \mathrm{of}\, \mu(\lambda)),
  \end{equation}
  and
  \begin{equation}\label{mul2}
   length((2m+1)\mathrm{th} \,\mathrm{row} \, \mathrm{of} \,\lambda)) \geq length((2m+1)\mathrm{th} \,\mathrm{row} \, \mathrm{of}\, \mu(\lambda)).
  \end{equation}
 From Fig.(\ref{ER}) and  Fig.(\ref{OR}), more accurate,    we have
 \begin{equation}\label{mul3}
  |length(k \mathrm{th }\,\mathrm{row} \, \mathrm{of} \,\lambda)- length(k\textrm{th} \, \mathrm{row} \, \mathrm{of}\, \mu(\lambda))|\leq 1.
 \end{equation}
\end{enumerate}
\end{prop}
\begin{rmk}\label{rbox}
\begin{enumerate}
  \item The difference  between $\lambda$ and $\mu$ is less than one box in each row, as shown in Figs.(\ref{OR}) and (\ref{ER}) (or formula ({mul3})). Additionally, the difference  between rows with the same contribution to the symbol is also less than one box per row, as indicated in Table \ref{newt}.
  \item For the above reason and  Propositions \ref{Pb}, \ref{Pd}, and \ref{Pc}, the height of a row in $\lambda$ would not change under both invariants preserving maps.
  \item For the above reason,  we can give each row of $\lambda^{'}$ (or $\lambda^{''}$) a name,  without a  confusion under symbol or fingerprint preserving maps.
\end{enumerate}

\end{rmk}

Note that the fingerprint invariant is  finer than the invariant $\mu$.
\begin{prop}\label{same}
Under the map $\mu$,  both $\mu(\lambda)$   and  $\tau(2m)$  are  invariants   of operators $(\lambda^{'},\lambda^{''})$ with the same fingerprint invariant.
\end{prop}
\begin{proof}
  Let $(\alpha,\beta)$ be the fingerprint invariant  of operators $(\lambda^{'},\lambda^{''})$. According to the calculation of the fingerprint invariant, $\mu(\lambda)$ is constructed by  doubling  parts of $\alpha$ and each part of $\beta$ multiplied by two. Thus it is  the same for operators with the same fingerprint invariant .

  With different values of $\tau(2m)$, the contribution of part $2m$  to the fingerprint invariant is different.  Thus it is  the same for operators with the same fingerprint invariant
\end{proof}

 The following proposition is critical in the proof of the conjecture.
As illustrated in  Figs.(\ref{OR}) and (\ref{ER}), under the map $\mu$, the changes of the odd rows of $\lambda$ are determined by that of the even rows.
\begin{prop}\label{de}
The even rows of a  partition $\lambda$   determine the fingerprint invariant  completely.
\end{prop}
\begin{proof}
According to Lemma \ref{ilem}, the end of the $(2k+1)$th row will lose one box if the end of the $(2k)$th row gains a box. The $(2k+1)$th row remains unchanged if the $(2k)$th row does not change. Thus, the changes in the even rows determine the changes in the odd rows under the map $\mu$.

According to Proposition \ref{same}, $\mu(\lambda)$ is an invariant and does not change under fingerprint-preserving maps.

Moreover, the conditions (\ref{con}) in Definition \ref{finger} need not be considered when we  calculate the partition $\alpha$ for the odd parts of $\lambda$. We only need to consider them  ($\tau(2m)$) of the even parts $2m$ of $\lambda$.
\end{proof}
\begin{rmk}
We give more evidences to this important lemma.
The values of  $\tau(2m)$ are derived from the conditions $Ci$'s. The following three sets give the same fingerprint invariant:
 \begin{enumerate}\label{set3}
   \item Rigid surface operator $(\lambda^{'},\lambda^{''})$.
   \item $\mu(\lambda)$ and $Ci$'s for all even parts.
   \item $\mu(\lambda)$ and $\tau(2m)$ for all even parts.
 \end{enumerate}
\end{rmk}

The following fact is the result of Propositions \ref{boxbox}(1), (2), and (3).
\begin{lem}\label{ccc2}
  If a part $2m$ satisfy the   condition $C2$,  the first  odd part  before it  must satisfy the condition $C1$.
\end{lem}
\begin{flushleft}
The above lemma suggests a possible example where condition $C2$ holds, but condition $C1$ does not.
\end{flushleft}
\begin{ex}\label{example2}
As shown in Fig.(\ref{con2}),  the heights  of the $(j-1)$th,  $(j-3)$th, and  $(j-5)$th rows are even.  The height difference between the $(j-1)$th row and the $(j-3)$th row, as well as between the $(j-3)$th row and the $(j-5)$th row, is $\lambda_i - \lambda_{i+1} =2$ (Note that $\lambda=\lambda^{'}+\lambda^{''}$ does not have to satisfy the rigid constraint.).
As shown in Fig.(\ref{con2}), we have $\mu_{m}=\lambda_{m}$,  which  does not  satisfy the condition $C1$, but it  satisfy the condition $C2$
$$\sum^{m}_{k=1}\mu_k \neq \sum^{m}_{k=1}\lambda_k.$$
Therefore,  the part $\lambda_{m}=j-3$ satisfies the condition $C2$ but does not satisfy the condition $C1$ \footnote{In Section 7.2, we will prove that this case is impossible  for rigid surface operators.  }.
\end{ex}

Immediately, we conclude:
\begin{lem}(\textbf{Rigid constraint})\label{cc2}
For the partition $\lambda$, $\lambda_i-\lambda_{i+1}\leq 1$ is called rigid constraint. Under the rigid constraint of $\lambda$,  the condition  $C1$  implies $C2$.
\end{lem}
\begin{rmk}\label{lemc}
\begin{enumerate}
\item Note that \textit{rigid constraint} is consistent with Rigid condition \ref{rigid} but they are different.
  \item  Since both rows in $\lambda'$ and $\lambda''$ have different lengths, we will not get $\lambda_i - \lambda_{i+1} > 2$ in $\lambda$.
  \item  \emph{Not until} Section \ref{equ}, will we consider   the partition $\lambda=\lambda^{'}+\lambda^{''}$ which has parts satisfy $\lambda_i-\lambda_{i+1} = 2$.
\end{enumerate}
\end{rmk}
\begin{figure}[!ht]
  \begin{center}
    \includegraphics[width=3.5in]{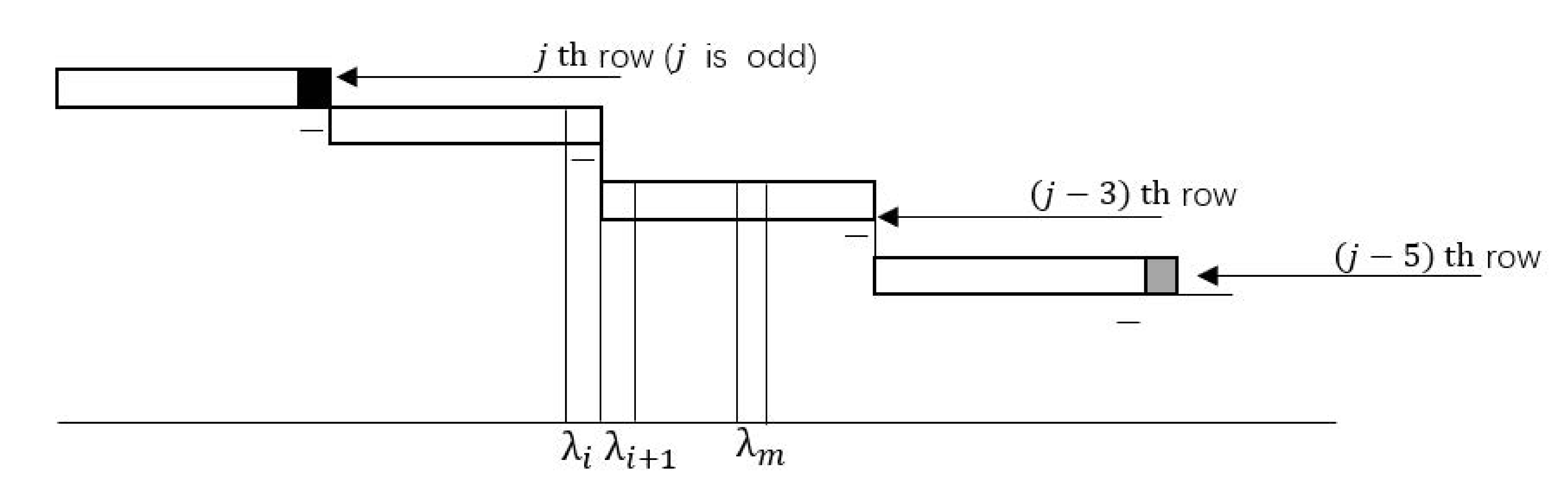}
  \end{center}
  \caption{Example satisfying  only condition $C2$: Even part $\lambda_{i+1}=j-3$ with  $\lambda_i-\lambda_{i+1}=2$.}
  \label{con2}
\end{figure}

\section{Examples }\label{example}
In this section, we introduce examples of maps that preserve symbols and demonstrate that these maps also preserve fingerprint invariants.
These examples provide insights into proving the equivalence of symbol and fingerprint invariants.



\begin{figure}[!ht]
  \begin{center}
    \includegraphics[width=4in]{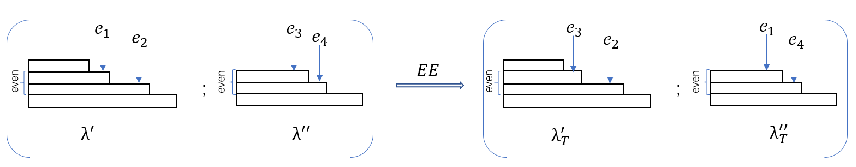}
  \end{center}
  \caption{ Symbol preserving  map $EE: (\lambda^{'},\lambda^{''})\rightarrow (\lambda^{'}_T,\lambda^{''}_T) $,  swapping   row $e_1$ in $\lambda^{'}$ with row $e_3$ in $\lambda^{''}$. Rows $e_1, e_2$ and $e_3, e_4$ are even pairwise rows. Rows $e_3, e_2$ and $e_1, e_4$ are even pairwise rows. }
  \label{eet}
\end{figure}
The first example is illustrated in Fig.(\ref{eet}).  The map $EE$ swaps the top row $e_1$ in an even pairwise rows with the top row $e_3$ in an even pairwise rows.
According to Table \ref{newt},  the contribution to the symbol of each row does not change under the map, thereby  preserving the symbol. Under the map $EE$,   $\lambda$ and $\lambda_T$ are equal as well  as their images under the map $\mu$ as shown in Fig.(\ref{mueet}).
\begin{figure}[!ht]
  \begin{center}
    \includegraphics[width=3.5in]{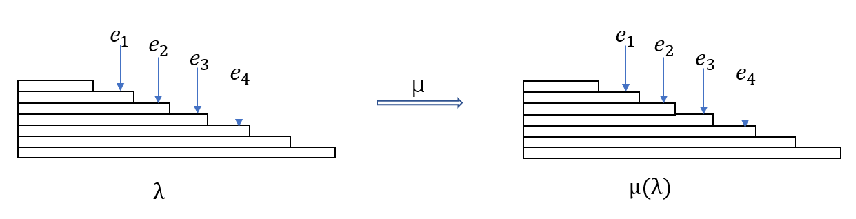}
  \end{center}
  \caption{ $\mu(\lambda)$ is exactly the same with $\mu(\lambda_T)$ under the map $EE$. }
  \label{mueet}
\end{figure}
To calculate the fingerprint, we need to calculate $\tau(2m)$ for each even part  $2m$ in $\mu(\lambda)$, which  is the same for $\lambda$ and $\lambda_T$ as shown in  Table \ref{tmueet}.
Hence, the map $EE$ preserve the fingerprint invariant. Note the part $2m=4$ does not satisfy both conditions $C1$ and $C3$.
\begin{table}[h]
\centering
\begin{tabular}{c|c|c|c}
\hline
$2m$& $C1$ & $C3$ &$\tau{(m)}$\\
\hline
6& N & Y &-1\\
\hline
4& N & Y &-1\\
\hline
2&N &Y& -1\\
\hline
\end{tabular}
\caption{\label{tmueet} Values $\tau{(2m)}$ for  even rows of the partition $\mu(\lambda)$. ‘N':  Condition be not satisfied and ‘Y': condition be satisfied. }
\end{table}


\begin{figure}[!ht]
  \begin{center}
    \includegraphics[width=4in]{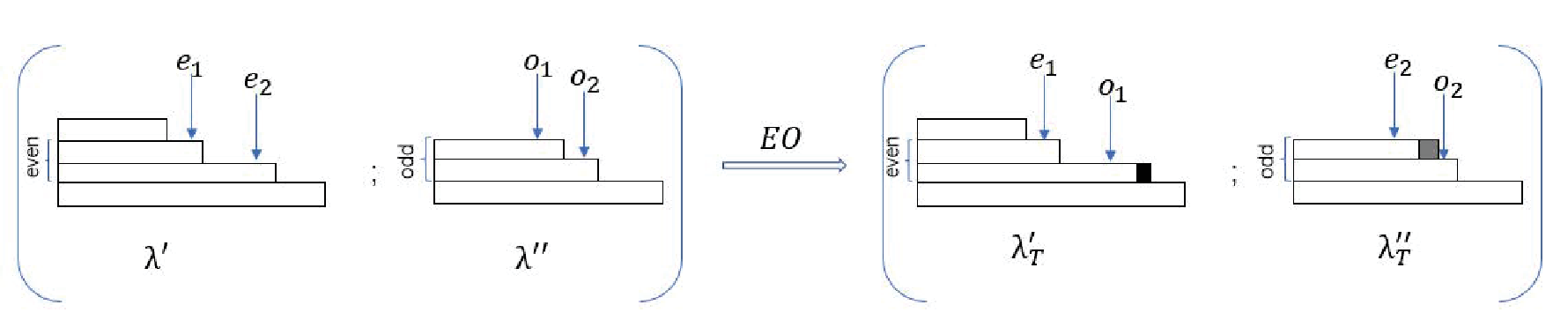}
  \end{center}
  \caption{Symbol preserving  map $EO: (\lambda^{'},\lambda^{''})\rightarrow (\lambda^{T'},\lambda^{T''}) $. It swaps $e_2$ and $o1$, omitting one box at the end of $o1$ and appending one box at the end of $e_2$ with $e_1<e_2<o_1<o_2$.  }
  \label{eto}
\end{figure}
The second example is shown in Fig.(\ref{eto}), the bottom row of an even pairwise rows  of $\lambda^{'}$  swap with the top row of an odd pairwise rows of $\lambda^{''}$. After the map, $o1$ has omitted a box at the end of the row and   $e_2$
is appended a box at the end of row.  According to Table \ref{newt}, the map $EO$ preserves the symbol.

Now we prove the map $EO$  preserve fingerprint.  First we calculate the fingerprint invariant on the left-hand side of the map. The image of $\lambda$ under $\mu$ is given by Fig.(\ref{mueto}).
\begin{figure}[!ht]
  \begin{center}
    \includegraphics[width=3.5in]{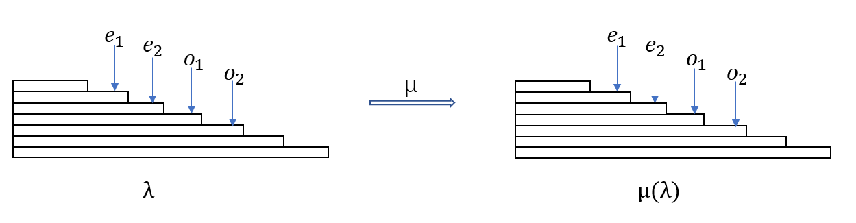}
  \end{center}
  \caption{ Image of  $\lambda $ under the map $\mu$. }
  \label{mueto}
\end{figure}

And the function $\tau(2m)$ for  even parts $m$ in $\mu(\lambda)$ is given by Table \ref{mueto}.
\begin{table}[h]
\centering
\begin{tabular}{c|c|c|c}
\hline
$2m$& $C1$ & $C3$ &$\tau{(m)}$\\
\hline
6& N & Y &-1\\
\hline
4& N & Y &-1\\
\hline
2&N &Y& -1\\
\hline
\end{tabular}
\caption{\label{tmueto} $\tau{(m)}$ for even rows of the partition $\lambda$.  }
\end{table}

\begin{figure}[!ht]
  \begin{center}
    \includegraphics[width=3.5in]{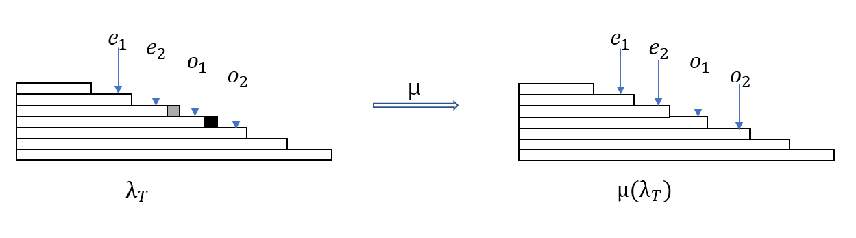}
  \end{center}
  \caption{ Image of  $\lambda_T $ under the map $\mu$. }
  \label{ttmueto}
\end{figure}
Next  we calculate the fingerprint invariant of the partitions  on the right hand side of the map $EO$. The image of $\lambda_T$ under $\mu$ is given by Fig.(\ref{ttmueto}). Note that $\mu(\lambda_T)=\mu(\lambda)$. And the function  $\tau(2m)$ of $\lambda_T$ is  given in Table \ref{mueto2}.
\begin{table}[h]
\centering
\begin{tabular}{c|c|c|c}
\hline
$2m$& $C1$ & $C3$ &$\tau{(2m)}$\\
\hline
6& N & Y &-1\\
\hline
4& Y & N &-1\\
\hline
2&N &Y& -1\\
\hline
\end{tabular}
\caption{\label{mueto2} Values $\tau{(2m)}$ of each even row of the partition $\lambda_T$.}
\end{table}
Compared with Table \ref{tmueto},  the part $2m=4$  satisfies different  conditions with the same $\tau(4)$.

The calculation of the above two examples provides the following insights, which will be rigorously proved in subsequent sections:
\begin{itemize}
  \item The rigid operators $(\lambda^{'},\lambda^{''})$  with the same symbol can be obtained by  combinations of different rows according to Table \ref{newt}.
  \begin{enumerate}
  \item There are combinations of rows that do not change the lengths of the rows, as shown in the first example.
  \item There are combinations of rows that do change the lengths of the rows, as demonstrated in the second example.
  \end{enumerate}
  \item The rigid operators $(\lambda^{'},\lambda^{''})$ with the same fingerprint invariant  correspond to the  parts that satisfy different conditions in formula (\ref{con}).
    \begin{enumerate}
  \item The condition satisfied by the part does not change under symbol preserving maps, as in the first example.
  \item The condition satisfied by the part changes under symbol preserving maps, as in the second example.
  \end{enumerate}
\end{itemize}

\section{Fingerprint  Preserving Maps  Preserve the Symbol Invariant}\label{mapf}
As demonstrated in the previous section, operators with the same invariant can have different configurations. In general, operators with the same symbol or fingerprint can vary significantly, as shown in Appendix B.

To prove the  fingerprint invariant  is equivalent to the  symbol invariant, we need to prove that if two rigid  operators  have the same   fingerprint, they also have the same  symbol and vice versa.
Since operators are composed of rows combined in various ways, operators with the same invariants can differ. Therefore, we consider the contribution to invariants from each row in partitions locally.

For the fingerprint, according to Proposition  \ref{same}, the $\mu(\lambda)$ is an invariant, meaning each row of $\lambda$ reduces to the same row in $\mu(\lambda)$ under the map $\mu$.
Different reductions correspond to different conditions satisfied by  rows of the partition according to formula (\ref{con}). To prove the equivalence of the two invariants, we must  show that these combinations preserve the symbol invariant.

First, we classify the fingerprint preserving maps. According to Proposition \ref{de}, \textit{we only need   to consider the even parts $2m$ of $\lambda$. }
Then, we prove that the classes of maps correspond to different classes of conditions satisfied by the even part. 
According to Lemma \ref{cc2}, condition $C1$  implies condition $C2$ for the partition $\lambda$ with Rigid constraint \ref{cc2}:  $\lambda_i - \lambda_{i+1} \leq 1$.
 Thus, we omit condition $C2$ for the moment until Section \ref{equ}.

 We will prove the conjecture for $B_n$ theory. The proof of the conjecture of other theories will be discussed in Section \ref{equ}.
\subsection{Classification of Fingerprint Preserving Maps}
In this subsection, we derive more results on the fingerprint invariant in preparation for the next two subsections.

The following lemma, derived from Lemma \ref{ilem}, is illustrated in Fig.(\ref{ER}).
\begin{lem}\label{lc13}
Under the map $\mu$, an even part $2m$ of the  partition $\lambda$
\begin{enumerate}
  \item that   satisfy the condition $C1$ implies the  $2m$th row has  been appended a box and   the $(2m+1)$th row has  been deleted a box at the end of the row, as shown in Fig.(\ref{ER})(a).
  \item that  does not satisfy the condition $C1$ implies no change of the  $2m$th row  and   the $(2m+1)$th row, as shown in Fig.(\ref{ER})(b).
\end{enumerate}
\end{lem}
The following lemma is derived from  Definition \ref{finger} and Remark \ref{def}.
\begin{lem}\label{lc13new}
If the  even part $2m=\lambda^{'}_i+\lambda^{''}_i$ does not satisfy the condition $C3$,  then  $\lambda^{'}_i$ is even and  $\lambda^{''}_i$ is also even.
If the  even part $2m=\lambda^{'}_i+\lambda^{''}_i$  satisfies the condition $C3$,  then  $\lambda^{'}_i$ is odd and  $\lambda^{''}_i$ is also odd.
\end{lem}

We have the following critical   proposition.
\begin{prop}\label{c13}
 The part $2m$ can not satisfy both  conditions $C1$ and $C3$.
\end{prop}
\begin{proof}  Assume $t$ is the $2m$th row of $\lambda$ as shown in Fig.(\ref{c1c3})(b).
   Suppose it  satisfies both  conditions $C1$ and $C3$.  Since the   part $2m$  satisfy the condition $C3$,     $\lambda^{'}_i$ is odd and  $\lambda^{''}_i$ is also odd,  according to Lemma \ref{lc13new}.
   Without loss generality,  assume  $t$   is the top row of a pairwise row of $\lambda^{'}$  according to Proposition \ref{Pb},   as illustrated  in Fig.(\ref{c1c3})(a).
   There is an odd number of rows of $\lambda^{''}$  under $t$ in $\lambda$,  according to Proposition \ref{Pd}.

   According to Propositions \ref{Pb} and \ref{Pd},  there are an even 
   number of boxes above $t$ in $\lambda$. Hence, the factor $(-1)^{p_{\lambda}(i)}=1$ at the end of $t$th row, indicating no  change of  the end of $t$ under the map $\mu{(\lambda)}$, according to the third row of Table \ref{ssst}.
   According to Lemma \ref{ilem}(4), the end of the $(2m+1)$th row does not change under the map $\mu$.

As shown in Fig.(\ref{ER})(b),
the part $2m$ does not satisfy the condition $C1$,  which  contradicts   the assumption.
\end{proof}
\begin{figure}
  \centering
  \includegraphics[width=4in]{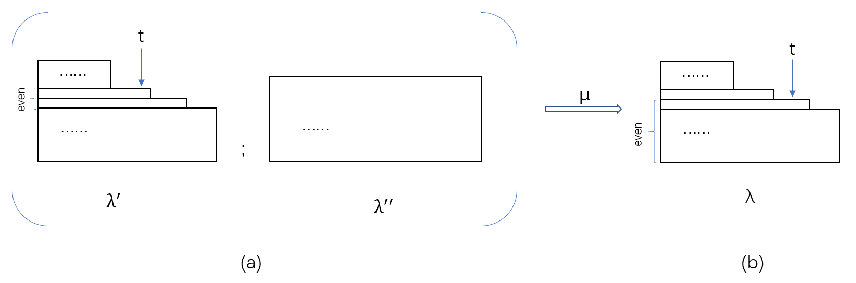}\\
  \caption{ Rigid surface operator $(\lambda^{'}, \lambda^{''})$ with $\lambda=\lambda^{'}+\lambda^{''}$. $t$ is the top row of an even pairwise rows in (a). The second `even'; means the height of the row $t$ is even with value $2m$. }\label{c1c3}
\end{figure}

According to Definition \ref{finger}, the following proposition is evident.
\begin{prop}\label{cn13}
If the  part $2m$ does not  satisfy both   conditions $C1$ and $C3$,  then $\tau{(2m)}=1$.
\end{prop}
\begin{rmk}\label{rc}
The above proposition implies that the even part $2m$ of the partition $\lambda$  either satisfies one of the conditions $C1$ and $C3$ or does not satisfy both.
\end{rmk}

Using Lemma \ref{lc13} and Proposition \ref{c13}, we have
\begin{prop}\label{p123}
\begin{enumerate} An even $2m$ part of $\lambda$ is classified as follows:
  \item The part $2m$  does not satisfy both the  $C1$ condition and $C3$ condition ($\tau(2m)=1$),  corresponding  to Fig.(\ref{ER})(b).
  \item The part $2m$  satisfies the  $C1$ condition($\tau(2m)=-1$),  corresponding to Fig.(\ref{ER})(a).
  \item The part $2m$  satisfies the  $C3$ condition($\tau(2m)=-1$), corresponding to Fig.(\ref{ER})(b).
\end{enumerate}
\end{prop}

Then, we can classify the fingerprint preserving maps as follows.
\begin{prop}\label{ccc}
 There are  two classes of fingerprint preserving maps for the even $2m$ part of $\lambda$:
 \begin{enumerate}
  \item  each case in Proposition \ref{p123} is  preserved.
  \item   Swap of   cases 2 and  3  in Proposition \ref{p123}.
  \end{enumerate}
\end{prop}
\begin{proof}
 For the even part $2m$, the conditions $\tau(2m)=1$ and $\tau(2m)=-1$ do not change under the fingerprint preserving map.
 \begin{itemize}
   \item  When $\tau(2m)=1$,   the part $2m$ does not satisfies both the conditions  $C1$ and  $C3$.
      \item  When $\tau(2m)=-1$, according to Proposition \ref{c13},  the conditions  $C1$ and  $C3$ would  either swap under the fingerprint preserving map or the part $2m$ satisfies the same condition under the map.
 \end{itemize}
 Let  the first class of fingerprint preserving maps  be the maps preserving  the conditions in the definition of fingerprint\footnote{The first class maps   are different from  the identity map as shown in the first example in Section \ref{example}.}.
Let the second class of fingerprint preserving maps  be the maps swapping  the conditions  $C1$ and  $C3$ when $\tau(2m)=-1$.
\end{proof}

The following conclusion is  required.
\begin{prop}\label{fingercc}
The classifications in Propositions \ref{p123} and \ref{ccc}  are complete.
\end{prop}
\begin{proof}
  This is the result of Remark \ref{set3} and  Proposition \ref{c13}.
\end{proof}

\subsection{Fingerprint Preserving Maps Without Change of  Length of Row}
For the first class fingerprint preserving maps in Proposition \ref{ccc}, we have the following proposition.
\begin{prop}\label{lll}
If the  conditions  satisfied by  a even part $2m$ of $\lambda$   does not change under fingerprint  preserving maps, the  length of the  $2m$th  of $\lambda$ and its location in pairwise rows  does not change under fingerprint  preserving maps.
\end{prop}
\begin{proof}
The location of the $2m$ th row of $\lambda$ in a pairwise rows corresponds to condition $C3$. Condition $C3$(satisfied or not)  is preserved  under the first class of fingerprint preserving maps. Thus, the location of the $2m$th row  in pairwise rows  does not change.

Note that we can only  append or delete one box at the end of the $2m$th row under the map $\mu$ according to Lemma \ref{ilem}.
\begin{itemize}
  \item Assume the part $2m$ does not satisfy both the conditions $C1$ and $C3$. Then it also does not satisfy both conditions $C1$ and $C3$ under the first class of fingerprint preserving maps, which means  $\tau{(2m)}=1$.
  The part $2m$ corresponds to Fig.(\ref{ER})(b), according to  Proposition \ref{p123}(1),  which means
  $$\textrm{length}\,\, \textrm{of}\,\, 2m \,\textrm{th}\,\, \textrm{row}\, \,\textrm{of}\,\,  \mu(\lambda)=\textrm{length}\,\, \textrm{of}\,\, 2m \,\textrm{th}\,\, \textrm{row}\, \,\textrm{of}\,\,  \lambda.$$
  If right hand side of the identity  changes under the fingerprint  preserving map, then  the value $\alpha$ in the fingerprint $(\alpha,\beta)$ involving  the the part $2m$ with $\tau{(2m)}=1$ changes, which is a contradiction.
  \item Assume the part $2m$  satisfies  the condition $C1$ only. According to  Proposition \ref{p123}(2),  it corresponds to  Fig.(\ref{ER})(a). 
  If the length of $2m$th row changes under fingerprint  preserving map,  the length of $2m$th row must be appended  one box  to get the same $\mu(\lambda)$, which  corresponds to  Fig.(\ref{ER})(b). To preserve $\tau{(2m)}=-1$,
    the part $2m$ satisfy condition  $C3$. Therefore  the condition satisfied by the part $2m$ changes, which is a contradiction.  
  \item Assume the parts $2m$  satisfy  the condition  $C3$ only. According to  Proposition \ref{p123}(3),  it corresponds to  Fig.(\ref{ER})(b). If the length of $2m$th row changes under fingerprint  preserving map,  the length of $2m$th row must be deleted  one box,  and then we append a box under the map $\mu$ to get the same $\mu(\lambda)$.
  To preserve $\tau{(2m)}=-1$,  the part $2m$ satisfies condition $C1$,  which is a contradiction.
\end{itemize}
We draw the conclusion.
\end{proof}

Consequently, we derive the following result.
\begin{prop}\label{fsfs}
The fingerprint preserving maps that do not alter the conditions in formula (\ref{con}) maintain the symbol invariant.
\end{prop}
\begin{proof}
The contribution to symbol of a row is determined by its location pairwise rows  and the length of row according to Propositions \ref{row-eo} (or Table \ref{newt}).
Then, Propositions  \ref{de} and \ref{lll} directly lead to this result.
\end{proof}

\subsection{Fingerprint Preserving Maps With Change of Length   of  Row}
We now examine the changes of the rows in $\lambda$ under the second class of fingerprint-preserving maps proposed in Proposition \ref{ccc}.

\begin{prop}\label{ll}
Assume the part $2m$ of $\lambda$ satisfies conditions $C1$ or $C3$. Under fingerprint preserving maps that swap conditions $C1$ and $C3$, the length of the $2m$th row of $\lambda$ changes.
\end{prop}
\begin{proof}
Assume the part $2m$ satisfies condition $C1$ but not condition $C3$.
Denote the length of the $2m$th row  of $\lambda$ as $L1$. Then we have
$$L1= \textrm{length}\,\, \textrm{of}\,\, 2m \,\textrm{th}\,\, \textrm{row}\, \,\textrm{of}\,\,  \mu(\lambda)+1$$
according to Proposition \ref{p123}(2).

According to Proposition \ref{ccc},  under the second class of the fingerprint preserving maps,  the part $2m$ satisfies the condition $C3$ but not the condition  $C1$.
 Let length of the $2m$th row of $\lambda$ be $L2$.
  Then we have
$$L2= \textrm{length}\,\, \textrm{of}\,\, 2m \,\textrm{th}\,\, \textrm{row}\, \,\textrm{of}\,\,  \mu(\lambda)$$
according to Proposition \ref{p123}(3).

The partition $\mu(\lambda)$ does not change according to Proposition \ref{same}..
   So we draw the conclusion
    $$L1\neq L2.$$

   Conversely, we can draw the same conclusion.
%
%
%
\end{proof}

According to Lemma \ref{lc13new}, the  change of the condition $C3$ satisfied by the even part $2m$ in $\lambda$ implies the change of the location of $2m$th row
in the pairwise rows.
\begin{prop}\label{llpp}
Assume the part $2m$ of $\lambda$ satisfies the conditions  $C1$ or  $C3$. Under the fingerprint preserving maps,
which swap the  conditions  $C1$ and  $C3$, the   location of $2m$th row
in   pairwise rows  changes.
\end{prop}
\begin{proof}
Let the part $2m$ satisfy the condition $C1$ but not the condition  $C3$. Then the height of the $2m$th row in $\lambda^{'}$($\lambda^{''}$) is odd.
Conversely, if  the part $2m$ satisfy the condition $C3$ but not the condition  $C1$,  the height of the $2m$th row in $\lambda^{'}$($\lambda^{''}$)is even.

Thus  swapping  the  conditions  $C1$ and  $C3$ implies  the   location of $2m$th row  in the pairwise rows  changes according to Propositions \ref{Pb} and \ref{Pd} (or see Fig.(\ref{bdcd})).
\end{proof}

Using Propositions \ref{ll} and \ref{llpp},
we refine the changes in the lengths of the pairwise rows under the fingerprint preserving maps.
\begin{itemize}
  \item For the bottom row of a pairwise rows, which is the $2m$th row in $\lambda$, we have the following result.
\begin{lem}\label{fseob}
Let  $b$  be the  bottom row  of  a pairwise rows. If it is  a $2m$th row in $\lambda$, then it  will be appended to a box under the map $\mu$.
\end{lem}
\begin{proof}
From the assumptions,
the part $2m$ does not satisfy the condition $C3$. According to Proposition \ref{c13},  it must  satisfy the condition $C1$. And then,   according to Fig.(\ref{ER}),  it  will append a box at the end  under the map $\mu$.

%
\end{proof}
\begin{prop}\label{fsfsb}
 Under the fingerprint preserving map,  which change the length of a row,
 the bottom row $b$ of a pairwise rows becomes the top row of a pairwise rows with the length increased by one box.
\end{prop}
\begin{proof}
  According to Propositions \ref{llpp}, the   location of  row $b$  in  pairwise rows  changes, making the  bottom row of a pairwise rows become the top row of a pairwise rows.

   According to Lemma \ref{fseob}, \textit{under the map} $\mu$, the row $b$ is increased by one box.  
  To  preserve  $\mu(\lambda)$,  this change can  only be realized if  the row $b$ is appended with one box after the fingerprint preserving map  because we assume its length changes and the $2m$th in $\lambda$ can not be deleted a box as shown in Fig.(\ref{ER}).
  We prove this by contradiction.
  Assume $b$  decreases  by one box, then under map $\mu$, we have
$$\mu(b-1)\leq length(b)< length(b)+1=\mu(b),$$
where $b-1$ means  row $b$  decreases by one box and $\mu(b)$ is the image of the row $b$ under the map $\mu$..  It is a contradiction.
\end{proof}
  \item For the top row of an even pairwise rows, which is the $2m$th row in $\lambda$, we have the following result.
\begin{lem}\label{fseot}
Let  $t$  be the  top row  of  a pairwise rows. If it is  the $2m$th row in $\lambda$, then it  does not change under the map $\mu$.
\end{lem}
\begin{proof}
From the assumptions,
the part $2m$  satisfies the condition $C3$. According to Proposition \ref{c13},  it does not  satisfy the condition $C1$. And then,   according to Fig.(\ref{ER}),  it  does not change at the end of the row under the map $\mu$.

\end{proof}

Using this lemma, we can prove the following proposition.
\begin{prop}\label{fsfst}
 Under the fingerprint preserving map, which change the length of the row,  the top row of a pairwise rows become the bottom row of a pairwise rows, with the length decreased by one box.
\end{prop}
\begin{proof}
  According to Propositions \ref{llpp}, the   location of  row $t$  in  pairwise rows  changes.
   So the top row of a pairwise rows becomes the bottom row of a pairwise rows.

   According to Lemma \ref{fseot}, \textit{under the map} $\mu$,  row $t$ does not change.  
  To  preserve  $\mu(\lambda)$,  this  can only occur  if row $t$ is deleted  one box after the fingerprint preserving map  because we assume its length changes and the $2m$th row  in $\lambda$ under $\mu$ can not be deleted a box as shown in Fig.(\ref{ER}).
  We prove this by contradiction.
Assume $t$  increases by one box, then under map $\mu$, we have
$$\mu(t+1)\geq length(t)+1>length(t)=\mu(t),$$
where $t+1$ means  row $t$  increases  by one box.  This  is a contradiction.
%
\end{proof}
\end{itemize}

The above results implies the symbol preserving rules.   We have the following Proposition.
\begin{prop}\label{rfs}
  The  fingerprint   preserving maps, which change the length of the row,  preserve the symbol invariant.
\end{prop}
\begin{proof}
We draw the conclusion by using  Propositions \ref{fsfsb} and \ref{fsfst}, which  are just rules in Proposition \ref{fsfsoo}.
\end{proof}
%
%
%
%
%

The preceding proposition complements Theorem \ref{fsfs}. Combining these two results, we have
\begin{thm}\label{fss}
  For rigid surface operators, the  fingerprint  preserving maps  preserve the symbol invariant.
\end{thm}

\section{ Symbol Preserving Maps  Preserve the Fingerprint Invariant.}\label{maps}
According to Proposition \ref{row-eo}, the contribution of each row in a partition to the symbol is invariant.  Hence, given a symbol, there are only a finite number of possible combinations for the rows.

In this section,  we prove that different  combinations of rows for  a symbol invariant   preserve the fingerprint invariant.

\subsection{Classification of   Symbol Preserving Maps}
Proposition \ref{row-eo} is equivalent to the following:
\begin{prop}\label{fsfsoo}
 The contribution to the symbol of the top row of a pairwise rows is equal to that of  the bottom row of a pairwise rows, with the length decreased by one box, and vice verse.  
\end{prop}
\begin{flushleft}
Given the contribution to symbol, if  the length of a row changes, the  pairwise rows which  the row belongs to   have different parities.
\end{flushleft}

Using the above proposition,  we can  immediately classify the symbol preserving maps.
\begin{prop}\label{cc}
 There are  two classes of symbol preserving maps.
 \begin{enumerate}
   \item No change in  the length of the row and its  location in a pairwise rows.
   \item Change in the length of the row and its location in a pairwise rows: The top row of a pairwise rows becomes the bottom row with the length decreased by one box, and vice versa.  
 \end{enumerate}
\end{prop}
The second class of symbol preserving maps corresponds to  Proposition \ref{fsfsoo}.  Under the  symbol preserving map,  if the length of a row does not change, its location in a pairwise rows would not also change according to Table \ref{newt}.
If the length of a row  changes, its location in a pairwise rows  also change. Then we have
\begin{itemize}
  \item The first class of symbol preserving maps corresponds to the change of the length of a row.
  \item The second  class of symbol preserving maps corresponds to without  change of the length of a row.
\end{itemize}

The following conclusion is  required.
\begin{prop}\label{symbolcc}
The classification in Propositions  \ref{cc}  is complete.
\end{prop}
\begin{proof}
  This is the result of Remark \ref{set33} and  Proposition \ref{fsfsoo}.
\end{proof}
\subsection{Symbol Preserving Maps Without Change of    Length of Row}
For the first class symbol preserving maps, we have
\begin{prop}\label{sfoecomplement}
 Let $r$ be the \(2m\)th row of \(\lambda\). Under the symbol preserving maps that without change the length of  row \(r\) in the operator \((\lambda', \lambda'')\), the image of row \(r\) under \(\mu\) does not change.
\end{prop}
\begin{proof}
Firstly, we prove the proposition for a row with even length. Assume \(b\) is the bottom row and \(t\) is the top row of an even pairwise rows of \(\lambda'\) (or \(\lambda{''}\)). According to Proposition \ref{Pb}, the height of \(b\) is even, and that of \(t\) is odd.

\begin{itemize}
\item Let \(r=b\) be the \(2m\)th row of \(\lambda\). Since the height of \(b\) is even in \(\lambda'\), part \(2m\)  does not satisfy  $C3$ condition.
       \begin{itemize}
        \item If it satisfies the \(C1\) condition,  according to Proposition \ref{p123}(2), we must append a box at the end of \(b\) of \(\lambda\) under the map \(\mu\), which means
        \begin{equation}\label{ssn1}
          \textrm{length}\,\, \textrm{of}\,\, 2m \,\textrm{th}\,\, \textrm{row}\, \,\textrm{of}\,\, \mu(\lambda) = \textrm{length}\,\, \textrm{of}\,\, 2m \,\textrm{th}\,\, \textrm{row}\, \,\textrm{of}\,\, \lambda +1.
        \end{equation}
        \item If it does not satisfy the \(C1\) condition,  according to Proposition \ref{p123}(1), we have
        \begin{equation}\label{ssn2}
          \textrm{length}\,\, \textrm{of}\,\, 2m \,\textrm{th}\,\, \textrm{row}\, \,\textrm{of}\,\, \mu(\lambda) = \textrm{length}\,\, \textrm{of}\,\, 2m \,\textrm{th}\,\, \textrm{row}\, \,\textrm{of}\,\, \lambda .
        \end{equation}
      \end{itemize}
 \item Let \(r=t\) be the \(2m\)th row of \(\lambda\). Since the height of \(t\) is odd in \(\lambda'\), part \(2m\) satisfies the \(C3\) condition. Thus, the length of \(t\) in \(\lambda\) does not change under the map \(\mu\) according to Proposition \ref{p123}(3), which means
         \begin{equation}\label{ssn3}
          \textrm{length}\,\, \textrm{of}\,\, 2m \,\textrm{th}\,\, \textrm{row}\, \,\textrm{of}\,\, \mu(\lambda) = \textrm{length}\,\, \textrm{of}\,\, 2m \,\textrm{th}\,\, \textrm{row}\, \,\textrm{of}\,\, \lambda .
        \end{equation}
\end{itemize}
   From the identities  (\ref{ssn1}), (\ref{ssn2}), and (\ref{ssn3}), the length of the image of \(r\) under \(\mu\) does not change under the symbol preserving maps without change of the length  of the $2m$th row .

Similarly, we can prove the proposition for a row with odd length.

%
\end{proof}



\subsection{Symbol Preserving Maps With Change of Length of Row}
We now examine  the conditions satisfied by the even part $2m$  of $\lambda$ under the second class of symbol preserving maps proposed in Proposition \ref{cc}.

First, we present a critical  lemma.
\begin{lem}\label{ssc13}
Under the symbol preserving maps that change the length of the $2m$th row of $\lambda$,  the part $2m$ must  satisfies  either   conditions  $C1$ or $C3$.
\end{lem}
\begin{proof}
According to Proposition \ref{c13}, the conditions $C1$ and $C3$ cannot be satisfied simultaneously by the  part $2m$ of $\lambda$.
Therefore,  the  part $2m$ satisfies   one of the following three cases of conditions:
\begin{itemize}
  \item $C1$ condition.
  \item $C3$ condition.
  \item Neither of $C1$ condition and $C3$ condition.
\end{itemize}

We prove that the third scenario is impossible by contradiction. An  counterexample is shown by the first  map $\mu$ in Fig.(\ref{fssc13}).
  \begin{figure}[!ht]
  \begin{center}
    \includegraphics[width=5in]{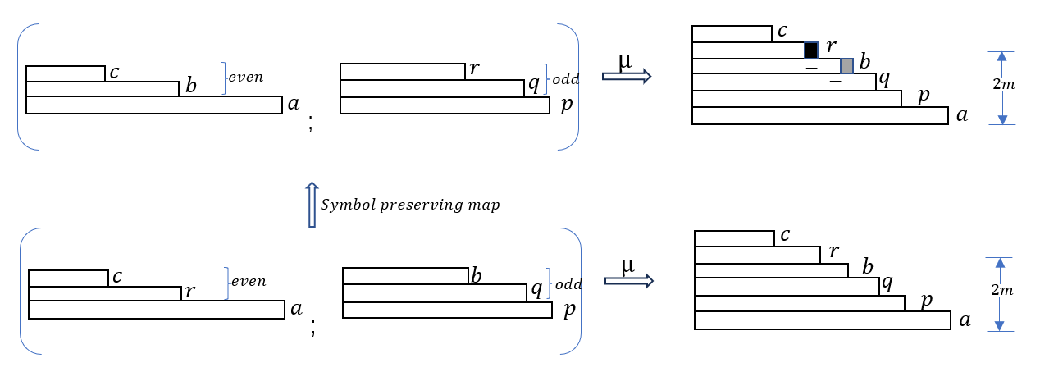}
  \end{center}
  \caption{The $2m$th row is $b$. Row $b$ and $r$ exchange   under the symbol preserving map, changing  the lengths of them. The images of $r$ and $b$ under map $\mu$ are given.}
  \label{fssc13}
\end{figure}

Let  row $b$ be the $2m$th row.
Under the the symbol preserving maps that change the length of the $2m$th row of $\lambda$, row $b$ and $r$ exchange by using the rules in Table \ref{newt}.
Assume row $q$ is the shortest  row of $\lambda^{''}$  under row $b$ in $\lambda$.
The parity of length of pairwise rows $cr$ is different from that of  pairwise rows $bq$.  The parity of length of pairwise rows $cb$ is different from that of  pairwise rows $rq$.

Assume $2m$  does not satisfy  both the  $C1$ condition and the $C3$ condition.
\begin{itemize}
  \item Since part $2m$  does not satisfy  the $C1$ condition,   under the map $\mu$,   the end of the   $2m$th does not change, corresponding to Fig.(\ref{ER})(b)(or Proposition \ref{p123}(1)).
  \item Since it does not satisfy $C3$ condition, there are even number of rows of $\lambda^{'}$ and $\lambda^{''}$ in the first $2m$ rows of $\lambda$    according to Lemma    \ref{lc13new}.
Thus the number of boxes above the $2m$th row $b$ is odd.  Then, under the map $\mu$,   the end of the   $2m$th is appended a box, corresponding to Fig.(\ref{ER})(a).
\end{itemize}
It is a contradiction for the above  results.
 \end{proof}

In Fig.(\ref{fssc13}), the part $2m$ satisfies the condition $C3$ under the bottom $\mu$. Under the   symbol preserving maps that change the length of the $2m$th row of $\lambda$,   the part $2m$ satisfies the condition $C1$ under the top map  $\mu$.   In general, we have
\begin{lem}\label{sfc13}
Under the symbol preserving maps that change the length of the $2m$th row of $\lambda$,  conditions $C1$ and $C3$ swap with each  other.
\end{lem}
\begin{proof}
According to Lemma \ref{fssc13}, the part $2m$ must  satisfy one of the conditions $C1$ or $C3$.

Assume  the part $2m$ satisfies the condition $C1$. Then, the height of  the $2m$th row of $\lambda$ is even in $\lambda^{'}$(or $\lambda^{''}$).
Under  the symbol preserving maps which change the length  of  $2m$th row, the height of the $2m$th row of $\lambda^{'}$ becomes odd in $\lambda^{''}$ according to Proposition \ref{fsfsoo}. Thus, the part $2m$ satisfy the condition $C3$.

Conversely,  assume   part $2m$ satisfy the condition $C3$, then the height of  the $2m$th row of $\lambda$ is odd in $\lambda^{'}$. 
Under  the symbol preserving maps which change the length  of  $2m$th row, the height of the $2m$th row of $\lambda^{'}$ becomes even in $\lambda^{''}$ according to Proposition \ref{fsfsoo}, which means it does not satisfy $C3$.
Thus,  it must satisfies $C1$ by Lemma \ref{fssc13}.
\end{proof}

Using these lemmas, we can draw the following conclusion.
\begin{prop}\label{sfoe}
Let $r$ be the \(2m\)th row of \(\lambda\). Under the symbol preserving maps that change the length of  row \(r\) in the operators \((\lambda', \lambda'')\), the fingerprint invariant is preserved,  which means the images of row \(r\) under \(\mu\) do not change.
\end{prop}
\begin{proof}
Using Lemma \ref{sfc13},  the symbol preserving maps that change the length of a row is the second class of fingerprint preserving maps. They preserve the fingerprint invariant, thus they preserve $\mu$.

\end{proof}

The above proposition complements Proposition \ref{sfoecomplement}. Now, we can draw the following conclusion.
\begin{thm}\label{csf}
For rigid surface operators, the symbol preserving maps preserve the fingerprint invariant.
\end{thm}

\begin{proof}
According to Propositions \ref{sfoe} and \ref{sfoecomplement}, \(\mu(2m)\) does not change under the symbol preserving maps.

 According to Proposition \ref{sfoecomplement}, the even parts of \(\mu\) with \(\tau=1\) do not change under the symbol preserving maps. Thus, the partition \(\alpha\) in the fingerprint invariant \((\alpha, \beta)\), which involves the even parts of \(\mu\), does not change under the symbol preserving maps.

According to the proof of Proposition \ref{sfoecomplement}, the conditions \(C1\) or \(C3\) are preserved under the symbol preserving maps that do not change the lengths of rows in the operators \((\lambda', \lambda'')\).
According to the proof of Proposition \ref{sfoe}, the conditions \(C1\) and \(C3\) swap under the symbol preserving maps that change the lengths of rows in the operators \((\lambda', \lambda'')\).
Thus, the even parts of \(\mu\) with \(\tau=-1\) do not change under the symbol preserving maps, meaning the partition \(\beta\) in the fingerprint invariant \((\alpha, \beta)\) also does not change.

The partition \(\alpha\) in the fingerprint invariant  involves the odd parts of \(\mu\) are determined by  even parts of $\mu$ according to proposition \ref{de}.

Combining all results, the fingerprint invariant \((\alpha, \beta)\) does not change under the symbol preserving maps.
\end{proof}

\section{Equivalence of  Symbol  Invariant and  Fingerprint Invariant }\label{equ}
 In preceding sections, we restricted the partition $\lambda$ to satisfy   the condition $\lambda_i-\lambda_{i+1} \leq 1$ for all $i$  in $\lambda$.
Due to this restriction,   condition $C2$   can be omitted according to Lemma \ref{cc2}. 
Under this constraint, the equivalence of symbol invariant and fingerprint invariant is proved according to Theorems  \ref{fss}  and  \ref{csf}.

In this section, we  relax the constraints   and  discuss  the condition $C2$  further. Then, we  prove the conjecture in the general case.

The proof the conjecture for $C_n$ and $D_n$ theories are discussed.
\subsection{Rigid Constraint: $\lambda_i-\lambda_{i+1} \leq 1$}
The constraints $\lambda_i-\lambda_{i+1} \leq 1$ are termed  the rigid constraint, which is consistent with the definition of Rigid condition \ref{rigid}.
Subject to rigid constraint, combining  Theorems \ref{fss} with Theorem \ref{csf},   we draw the following conclusion.
\begin{thm}
For rigid surface operators, the symbol invariant  is  equivalent to the fingerprint invariant  under the constraints $\lambda_i-\lambda_{i+1} \leq 1$ for all $i$ in  $\lambda$.
\end{thm}

\subsection{No Rigid Constraint}
If there are two equal rows in $\lambda$, such that  $\lambda_i-\lambda_{i+1} =2$ in Remark \ref{lemc},   we would  consider  the condition $C2$  in Definition \ref{con} as explained in Fig.(\ref{con2}) of  Example \ref{example2}. Now,  we discuss the influences of the occurrence  of such  equal two rows in $\lambda$ on  the   proofs  in Sections \ref{mapf} and \ref{maps}.
%
%
%
%

\begin{flushleft}
\textbf{Fingerprint invariant implies symbol invariant (Sections \ref{mapf}):} 
\end{flushleft}
%

Surprisingly,  we have
\begin{lem}\label{subfinger}
 For  rigid surface operator  $(\lambda^{'}, \lambda^{''})$,  the part $\lambda_i=2m$ of $\mu(\lambda)$ with $\tau(2m)=-1$ and  $\lambda_{i}-\lambda_{i+1} =2$ in $\lambda$ can not satisfy the $C2$ condition.
\end{lem}
\begin{proof}The $2m$th row and $(2m-1)$th row of $\lambda$ with equal length must belong to  different partitions of    $(\lambda^{'}, \lambda^{''})$. And they must be in the same position of   pairwise rows  according to the definition of rigid $B_n$ surface operator: Both the heights of the $2m$th row and $(2m-1)$th row of $\lambda$ should have the same parity.
Thus, there is an even number of boxes above the $2m$th row in  $\lambda$, which means  $(-1)^{p_\lambda(i)}=1$ at  the end of the $(2m+1)$ row of $\lambda$ corresponding to Fig.(\ref{OR})(b) or (d) {\footnote{Hence,  no box  can be deleted at the end of the $j$th row in  the example in Fig.(\ref{con2}), rendering this example impossible.}}.
At the end of the $(2m+1)$th row, we have  $\sum \mu_k =\sum \lambda_k$   according to Proposition \ref{boxbox}(2).
 Thus,  part $2m$ corresponds to   Fig.(\ref{ER})(b)
 which means it can not  satisfy the condition $C2$.
\end{proof}

This lemma means  the condition $C2$ can be omitted in the definition of the fingerprint invariant for \textit{rigid surface operator} \footnote{ However,  the condition can not omitted for general operators according to  the proof of this lemma.}.
We analyze this problem furthermore, considering two cases for rows of equal length:
\begin{itemize}
   \item Rows in the same positions of  pairwise rows: According to  the proof of Lemma \ref{subfinger},  the ${2m}$th and ${(2m-1)}$th rows of $\lambda$  are equal to  the ${2m}$th and ${(2m-1)}$th rows of $\mu$, which means the lengths and the positions  in pairwise rows of these   two rows are preserved under the fingerprint preserving maps.
  \textit{ Then their contributions to symbol invariant are   preserved under the fingerprint preserving maps.}
   \item    Rows in  different  positions of  pairwise rows(According to Lemma \ref{subfinger}, in fact,  it is impossible.):  Then the heights of these equal rows in $\lambda$ are odd, which can be omitted in the discussions  according to Proposition \ref{de}.
\end{itemize}

\begin{flushleft}
\textbf{Symbol invariant implies fingerprint invariant(Sections \ref{maps}):} From the view of the symbol invariant, we would get a more clearer picture.
\end{flushleft}
According to the  arguments in Lemma \ref{subfinger}, the $2m$th row and $2m-1$th row in $\lambda^{'}$ and $\lambda^{''}$, respectively, must be in the same position of   pairwise rows with the same parity.
Thus, these two equal rows  do not change under symbol preserving maps according to Table.\ref{newt}.  Then the number of boxes of $\lambda$ above these two rows is fixed under the symbol preserving maps, \textit{which means they preserve the fingerprint invariant. }


In summary,
\begin{enumerate}
\item Condition $C2$ in the definition of fingerprint invariant can be omitted for rigid surface operators.
  \item Two equal rows of $(\lambda^{'}, \lambda^{''})$  equal to the image under the map $\mu$. Thus, they do not change under fingerprint preserving maps.
  \item Two equal rows of $(\lambda^{'}, \lambda^{''})$   have the same position in pairwise rows and the same length under  symbol preserving maps.  Thus, they do not change under symbol preserving maps.
\end{enumerate}
No  changes of Two equal rows  under  invariants preserving maps, which means both of these maps belong to the first class maps. And these invariants preserving maps preserve each other.
The presence of these two equal rows in $\lambda$ does not affect the proofs of Theorems \ref{fss} and \ref{csf} where we have  not used     the condition $C2$.
Hence   we prove the conjecture that
\begin{thm}\label{bt}
The symbol invariant  is  equivalent to the fingerprint invariant  for  rigid surface operators in the $B_n$ theory.
\end{thm}

\subsection{$C_n(Sp(2n))$ and $D_n(SO(2n+1))$ theories}
For a rigid  operator $(\lambda^{'},\lambda^{''})$ in $B_n$ theory,   $\lambda^{'}$  and   $\lambda^{''}$ are elements of  $B_n$ and  $D_n$ theories, respectively.
In this subsection, we will prove Theorem \ref{bt} for rigid operators in  $D_n$  and  $C_n$ theories.
\begin{flushleft}
$D_n$ \textbf{theory:}
\end{flushleft}
For a rigid  operator $(\lambda^{'},\lambda^{''})$ in $D_n$ theory,  both $\lambda^{'}$  and  $\lambda^{''}$ are elements in $D_n$ theory.
To  calculate   the fingerprint invariant, we compute the invariant $\mu$ of $(\lambda^{'},\lambda^{''})$ using the same conditions as  the operators in $B_n$ theory according to  Definition \ref{finger}.
For symbol invariant,  since the rules in Table \ref{newt} are independent of theories, we use Table \ref{newt} as  the operators in $B_n$ theory according to  Definition \ref{D2}.
Therefore, for the $D_n$
  theory, we can draw the same conclusions using the same strategy as for the $B_n$
  theory without modifications.

\begin{flushleft}
$C_n$ \textbf{theory:}
\end{flushleft}
For a rigid  operator $(\lambda^{'},\lambda^{''})$ in $C_n$ theory,  we have $\lambda^{'}$  in $C_n$ theory and $\lambda^{''}$  in $C_n$ theory.
For the calculation of the fingerprint invariant, we compute the invariant $\mu$ of $(\lambda^{'},\lambda^{''})$ using the same conditions as  the operators in $B_n$ theory  except that the condition
 $$(3)_{SO} \quad \lambda^{'}_{i} \quad\textrm{is odd}$$
  changes to
  $$(3)_{Sp}\quad \lambda^{''}_{i}  \quad   \textrm{is even}$$
according to  Definition \ref{finger}.
For the calculation of the symbol invariant, we use Table \ref{newt} as   the operators in $B_n$ theory according to  Definition \ref{D2}.
Thus, for $C_n$
  theory, we can draw the same conclusions with minor modifications by changing the condition $(3)_{SO}$
  to $(3)_{Sp}$
  in the arguments.

In summary, the top row or  bottom row of a pairwise rows has the same parities of height  for the partitions in the rigid surface operator $(\lambda^{'},\lambda^{''})$ in the $B_n$,  $C_n$,  and $D_n$ theory.
 Thus, we can prove the conjecture for all theories by  using  the  procedures in $B_n$ theory with minor modification.
Finally, conclude
\begin{thm}\label{fff}
The symbol invariant  is equivalent  to the fingerprint invariant  for the rigid surface operators in the $B_n$, $C_n$, and $D_n$ theories.
\end{thm}

\section{Conclusions}\label{summary}
The rigid surface operators are described by the partitions pair $(\lambda^{'}, \lambda^{''})$ \cite{GW08}. In this paper, we have  proven the conjecture proposed in \cite{Wy09} that the fingerprint invariant is equivalent to the symbol invariant for rigid surface operators, leading to the result
\begin{flushleft}
\textbf{ `Theorem 7.3: The symbol invariant  is equivalent  to the fingerprint invariant  for the rigid surface operators in the $B_n$, $C_n$, and $D_n$ theories.'}
\end{flushleft}
The examples in Section \ref{example} provide significant insights into the proof of the equivalence of these two invariants: Type of invariants preserving maps involve the change of the length of row.
The fingerprint preserving maps,  which change the length of a row, relate to   the conditions $Ci$ satisfied by $2m$ part in $\lambda$.

 Proposition \ref{de} and Lemma \ref{subfinger} are crucial for proving the conjecture.
 \begin{itemize}
   \item Lemma \ref{subfinger} implies that condition $C2$ in the definition of the fingerprint invariant for rigid surface operators is redundant and can be omitted.
Thus, calculating the fingerprint invariant needs to consider only conditions $C1$ and $C3$, which encompass all the information about  $\lambda'$ and $\lambda''$.
   \item  Proposition \ref{de} indicates that the even rows of the partition $\lambda$ encode all the useful information of the fingerprint invariant of rigid surface operators.
This proposition allow  us to focus on the even rows of $\lambda$ and neglect the odd ones,   reducing  the complication of the classification of the fingerprint persevering maps as well as the entire proof.
 \end{itemize}

Section \ref{mapf} presents the first part of the proof, establishing that the fingerprint invariant implies the symbol invariant for rigid surface operators, as summarized in Table \ref{cfinger}. This table focuses on the changes in the $2m$th row of $\lambda$.
The table uses the following notations:
\begin{itemize}
  \item $F(S)$: Fingerprint invariant or symbol invariant.
  \item $C(N)$: Change of the length of  row  or no change.
  \item $1(-1)$: Values of $\tau(2m)=1(-1)$.
\end{itemize}
For example, $FN_1$ denotes fingerprint invariant preserving maps with a length change and $\tau(2m)=1$.
The propositions in brackets provide the conclusions.
The first
columns list the classifications of the fingerprint invariant preserving maps,
the second
column specifies the conditions satisfied, the third column describe the change of the length of row,  the fourth column identifies the location in the pairwise rows, and the fifth column references the theorems providing the proof.
The classification of fingerprint-preserving maps is on the left of the double vertical line, while the right side demonstrates that these maps also preserve the symbol invariant.
\begin{table}
\begin{tabular}{|l|l||c|c|l|}\hline
Type & Conditions:$Ci$ &Length($2m$th row)&  Location  & Theorem \\ \hline
$FN_1$ & Neither(P\ref{ccc}(1))&No change(P\ref{lll})  &   No change(P\ref{lll}) & P\ref{fsfs} \\ \hline
$FN_{-1}$ & $C1$ or $C3$ (P\ref{ccc}(1))  &No change(P\ref{lll})  & No change(P\ref{lll}) & P\ref{fsfs}\\ \hline
$FC_{-1}$& $C1\leftrightarrow C3$(P\ref{ccc}(2))& Change(P\ref{ll})  & $b\leftrightarrow  t$(P\ref{llpp})& P\ref{rfs} \\ \hline
\end{tabular}
\caption{Fingerprint invariant implies the symbol invariant.}
\label{cfinger}
\end{table}

In Section \ref{maps}, the second part of the proof, we demonstrate that the symbol invariant implies the fingerprint invariant, as summarized in Table \ref{csymbol}.
The left side of the double vertical line classifies the symbol-preserving maps, while the right side shows that these maps also preserve the fingerprint invariant.
The classification of the symbol preserving maps is given by Proposition \ref{cc}, which  is based on  Proposition \ref{fsfsoo}.
Proposition \ref{fsfsoo} is a concise form of Table \ref{newt}, derived from the construction of the symbol invariant \cite{Shou-sc}.

\begin{table}
\begin{tabular}{|l|l|c||c|c|}\hline
Type&Length($2m$th row) &  Location&   Conditions:$Ci$ & Theorem \\ \hline
$SN_{1}$&No(P\ref{cc}(1)) & No(P\ref{cc}(1))&   Neither (P\ref{sfoecomplement}) & P\ref{sfoecomplement}\\ \hline
$SN_{-1}$&No(P\ref{cc}(1)) & No(P\ref{cc}(1))  &  $C1$ or $C3$ (P\ref{sfoecomplement})  & P\ref{sfoecomplement}\\ \hline
$SC_{-1}$&Change(P\ref{cc}(2))& $b\leftrightarrow  t$(P\ref{cc}(1)) & $C1\leftrightarrow C2$(Lem\ref{sfc13})  & P\ref{sfoe} \\ \hline
\end{tabular}
\caption{Symbol invariant implies the fingerprint invariant.}
\label{csymbol}
\end{table}

 From Tables \ref{cfinger} and \ref{csymbol}, we have:
 \begin{itemize}
 \item Note that  we focus exclusively on the even rows in according to  Proposition  \ref{de}.
   \item The classification of  symbol preserving  maps and  fingerprint preserving map is complete,  according to  Propositions \ref{symbolcc} and \ref{symbolcc}. The change of the $2m$th row relates to its location  in a pairwise rows in rigid surface operator and the conditions $C1$ and $C3$ satisfied by part $2m$ in $\lambda$.
   \item The second table is a   reverse process of the first one, demonstrating their consistency: The exchange of conditions $C1$ and $C3$ satisfied by part $2m$  relates to  the change of the position in a pairwise rows of the $2m$th row in the second class maps.
   The conditions $Ci$ and the locations are preserved in the first class maps.
 \end{itemize}

 The proof of the conjecture involves considering the influence of even rows of a rigid surface operator $(\lambda^{'}, \lambda^{''})$, which is `local'.
In Appendix A, we  describe another strategy to prove that the symbol invariant implies the fingerprint invariant. This approach is based on the global classification of  symbol preserving maps, differing from the approach in Proposition \ref{cc}.
Since we cannot use Proposition \ref{de}, the proof becomes more complex and less feasible.

Clearly, more work can be done.
Given a fingerprint invariant or a symbol invariant,  there are many rigid surface operators with the same invariant as shown in Appendix A.
It would be interesting to  find  canonical-like element among them.
Since the invariants are equivalent, it would be interesting to finding a formula to calculate one invariant from the other one. Hopefully our constructions
will help make further progress.


In addition, the Weyl group of a simple Lie algebra is of particular importance. Both its conjugacy classes and unitary representations are parameterized by partitions but there is no canonical isomorphism between them.
The  Kazhdan-Lusztig map  is a map from the rigid surface operator to the set of conjugacy classes of the Weyl group (fingerprint invariant).
The Springer correspondence  is a map from a rigid surface operator to a unitary representation  of the Weyl group (symbol invariant). For rigid surface operators,  we have proved the equivalence of the two invariants.
Then it would be interesting to finding  a canonical isomorphism between them using the canonical elements of the two invariants for rigid surface operators.
 The proof  should shed light on further studies.

\begin{center}
\textbf{Data Availability Statement}
\end{center}
The data that support the findings of this study are available from the corresponding author, [BaoShou], upon reasonable request.

\section*{Acknowledgments}
We would like to thank   Xiaoman Luo  for  many helpful discussions.
Chuanzhong Li is supported by  National Natural Science Foundation of China (Grant No.12071237).

\appendix
\section{Classification of Maps Preserving Symbol}\label{cla}
In this section, we give a `global' classification of the symbol preserving maps,   distinct from that in Section \ref{maps}.
There are two fundamental types of symbol-preserving maps: $S$ type and $D$ type.
The symbol preserving maps in this section are constructed  using Table \ref{newt} or Proposition \ref{row-eo}. We assume the  surface operators are in the  $B_n$ theory.

\begin{flushleft}
\textbf{{Take odd rows from one partition to another partition:}}
\end{flushleft}
 As shown in Figs.(\ref{te}) and (\ref{to}),  
 the parity of the length of  row $a$ and the rows above it are all  even.
The parities of the lengths of the last several  rows of $\lambda^{''}$  are odd.

 Assume row $a$ is  the bottom  row of an even  pairwise rows. It is inserted into the partition  $\lambda^{''}$ as the bottom row of an even pairwise rows.
To preserve the symbol,   the  parities of the lengths and heights of the rows above $a$ in $\lambda^{''}$ would change under the map $TE$, as well as that of the rows above $a$ in  $\lambda^{''}$.
\begin{figure}[!ht]
  \begin{center}
    \includegraphics[width=4in]{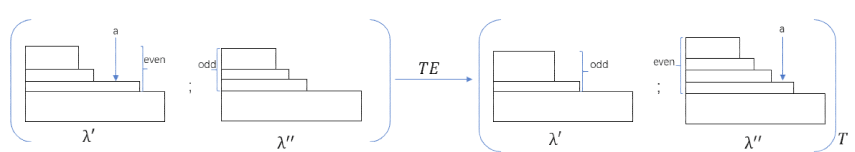}
  \end{center}
  \caption{Row $a$ is inserted  into $\lambda^{''}$ under the map $TE$. }
  \label{te}
\end{figure}

  As shown in Fig.(\ref{to}),  row $a$  is inserted into the partition $\lambda^{''}$ as the  top row of an odd pairwise rows. To preserve the symbol, the parities of the lengths and heights of the rows above $a$ of  $\lambda^{'}$  change under the map  $TO$, making  the rows above $a$ of the  partition $\lambda^{''}$  even.
\begin{figure}[!ht]
  \begin{center}
    \includegraphics[width=4in]{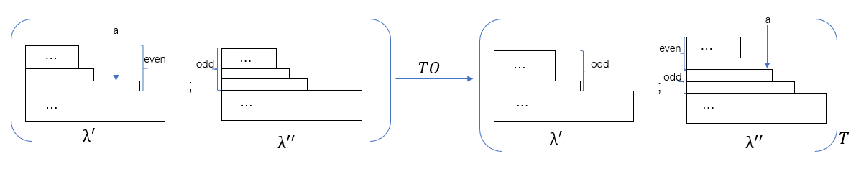}
  \end{center}
  \caption{The row $a$ is inserted   into $\lambda^{''}$ under the map $TO$. }
  \label{to}
\end{figure}

Similarly,  an odd row from   $\lambda^{'}$  can be inserted into $\lambda^{''}$,   leading to two additional   two cases. \footnote{We would not consider    maps suffering  from   severe constraints, as shown in Figs.(\ref{te}) and (\ref{to}).}

\begin{figure}[!ht]
  \begin{center}
    \includegraphics[width=4in]{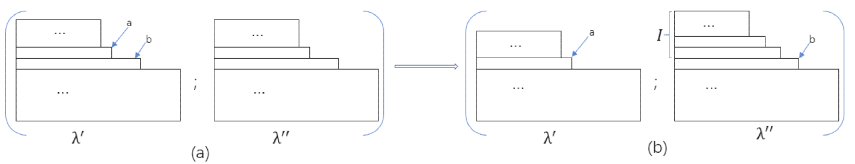}
  \end{center}
  \caption{Insertion of  the row  $b$ of $\lambda^{'}$ into $\lambda^{''}$. }
  \label{s1}
\end{figure}
To preserve the symbol, inserting a row from $\lambda^{'}$ into $\lambda^{''}$ changes the parities of the lengths and heights of all rows above the insertion position. Inserting another shorter row halts these changes.
The rows $a$ and $b$ are of pairwise rows. We insert the row $b$ of $\lambda^{'}$ into $\lambda^{''}$ as shown in Fig.(\ref{s1})(a). To  preserve the symbol, the parities of lengths and heights of   the rows in the region $I$  change  similar  to the maps $TE$ and $TO$, as well as the rows above the row $b$ of  $\lambda^{'}$.
Next, we insert  row $a$ into $\lambda^{''}$ as shown in Fig.(\ref{s2}).  The parities of lengths and heights of the  rows in the region $III$  change  again, recovering  the original parities of   lengths and heights as  in Fig.(\ref{s1})(a), as well as those of the rows above $a$ of $\lambda^{'}$.
If    rows $a$ and $b$ are not of a  pairwise rows, the same result can be obtained with minor modifications.
\begin{figure}[!ht]
  \begin{center}
    \includegraphics[width=4in]{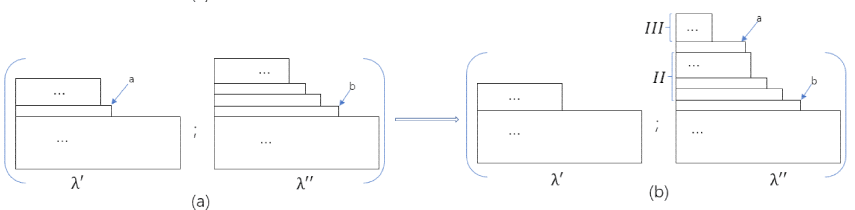}
  \end{center}
  \caption{Insertion of  the row $a$ into $\lambda^{''}$. }
  \label{s2}
\end{figure}

\begin{figure}[!ht]
  \begin{center}
    \includegraphics[width=4in]{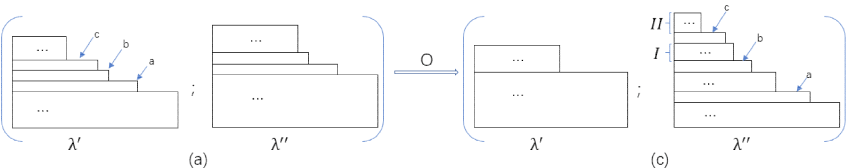}
  \end{center}
  \caption{Insertion of  the rows $a$, $b$, and  $c$ into $\lambda^{''}$. }
  \label{o}
\end{figure}
Inserting three rows from $\lambda^{'}$
 into $\lambda^{''}$
  can be decomposed into two independent fundamental maps.
For a rigid  operator as shown in Fig.(\ref{o})(a), we insert three rows $a$, $b$, and $c$  into $\lambda^{''}$. To  preserve the symbol,  the parities of the lengths of all the rows in the region $II$ do  not  change under the map $O$.   the parities of lengths and heights of all the rows in the region $I$ would   change  according to the maps   $TE$ and $TO$ as shown in Fig.(\ref{te}) and Fig.(\ref{to}).  Then the map $O$ can be decomposed into two maps $O1$ and $O2$ as shown in Fig.(\ref{o1}) and Fig.(\ref{o2}).
\begin{figure}[!ht]
  \begin{center}
    \includegraphics[width=4in]{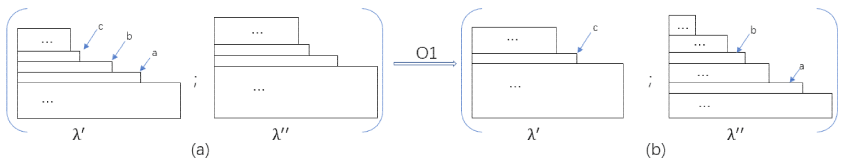}
  \end{center}
  \caption{Insertion of  the rows $a$ and $b$   into $\lambda^{''}$. }
  \label{o1}
\end{figure}
\begin{figure}[!ht]
  \begin{center}
    \includegraphics[width=4in]{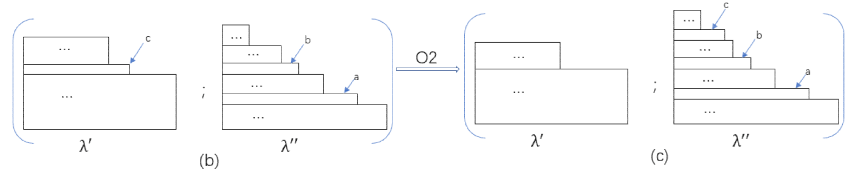}
  \end{center}
  \caption{Insertion of the rows   $c$ into $\lambda^{''}$. }
  \label{o2}
\end{figure}

\begin{flushleft}
\textbf{{Take even rows from one partition to another partition:}}
\end{flushleft}

\begin{figure}[!ht]
  \begin{center}
    \includegraphics[width=3in]{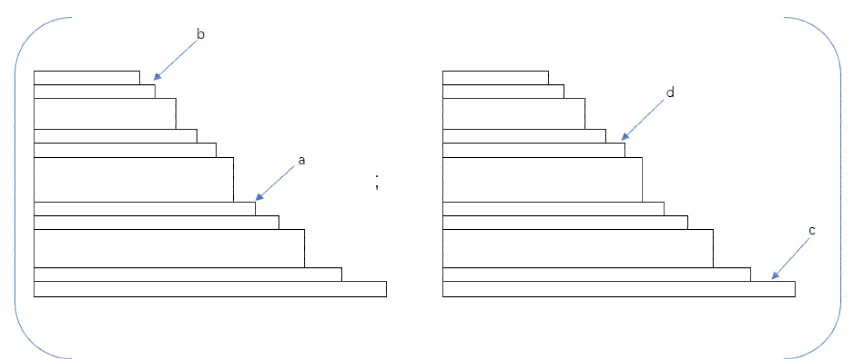}
  \end{center}
  \caption{Partitions  $\lambda^{'}$ and $\lambda^{''}$ with  lengths $L(b)>L(d)>L(a)>L(c)$. }
  \label{abcd}
\end{figure}
Next, we consider the scenario where two rows from $\lambda'$
  are inserted into $\lambda''$
  and vice versa.
 As shown in Fig.(\ref{abcd}), $a$ and $b$ are rows of  $\lambda^{'}$  and $c$ and $d$ are rows of  $\lambda^{''}$,   with  lengths $L(b)>L(d)>L(a)>L(c)$. The insertion  of   rows $a$ and $b$ into $\lambda^{''}$ and   rows $c$ and $d$ into $\lambda^{'}$ simultaneously can be decomposed into two separate mappings.
 One mapping involves inserting row $a$ into $\lambda''$
  and row $c$ into $\lambda'$
  simultaneously, and the other mapping involves inserting row $b$ into $\lambda''$
  and row $d$ into $\lambda'$
  simultaneously.
\begin{figure}[!ht]
  \begin{center}
 \includegraphics[width=3in]{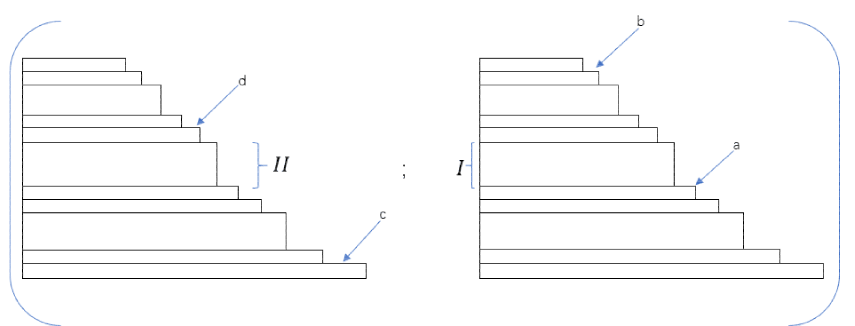}
  \end{center}
  \caption{Insertions of  the rows $a$ and $b$ into $\lambda^{''}$ and insertion of  the rows $c$ and $d$ into $\lambda^{'}$ happen at the same time  at the same time. Regions $I$ and $II$ are the big rectangle in the middle of $\lambda^{'}$ and $\lambda^{''}$, respectively. }
  \label{ab}
\end{figure}

The insertions  of  the rows $a$ and $b$ into $\lambda^{''}$ and  the rows $c$ and $d$ into $\lambda^{'}$ at the same time are  shown in Fig.(\ref{ab}). The parities of lengths and heights of the rows in the region $I$ and $II$  do not change   under this map.
So we can insert   the row  $a$ into $\lambda^{''}$ and insert the row $c$ into $\lambda^{'}$ at the same time firstly as shown in Fig.(\ref{ab1}). And then  insert   the row  $b$ into $\lambda^{''}$ and insert the row $d$ into $\lambda^{'}$ at the same time.
Or we can insert   the row  $b$ into $\lambda^{''}$ and insert the row $d$ into $\lambda^{'}$ at the same time firstly as shown in Fig.(\ref{ab2}). And then insert   the row  $a$ into $\lambda^{''}$ and insert the row $c$ into $\lambda^{'}$ at the same time.
\begin{figure}[!ht]
  \begin{center}
    \includegraphics[width=3in]{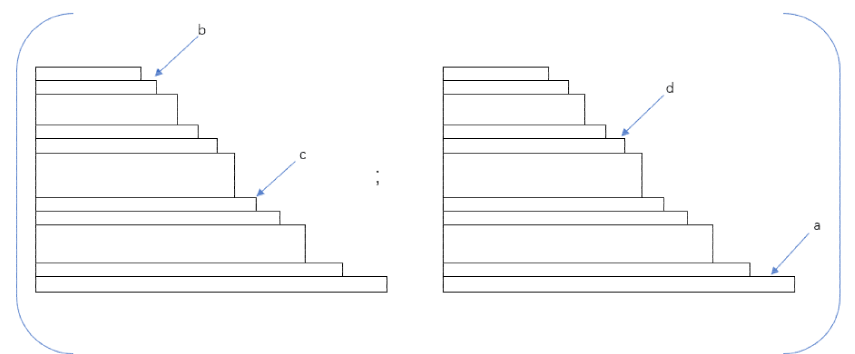}
  \end{center}
  \caption{Insertion of  the row $a$ of $\lambda^{'}$ into $\lambda^{''}$ and insertion of  the row $c$ of $\lambda^{''}$ into $\lambda^{'}$ at the same time.}
  \label{ab1}
\end{figure}
\begin{figure}[!ht]
  \begin{center}
    \includegraphics[width=3in]{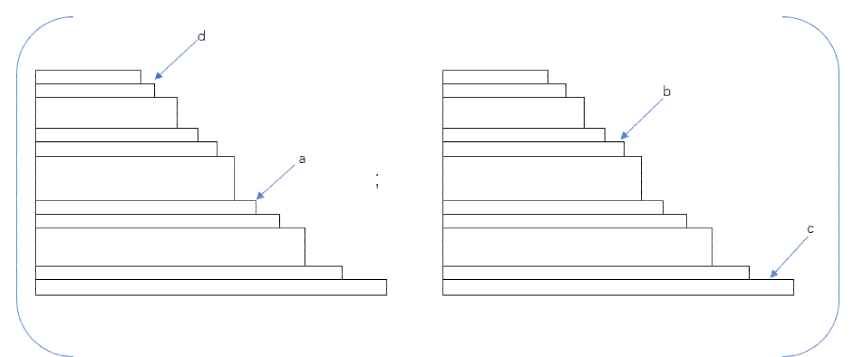}
  \end{center}
  \caption{Insertion of  the row  $b$ of $\lambda^{'}$ into $\lambda^{''}$ and insertion of  the row $d$ of $\lambda^{''}$ into $\lambda^{'}$ at the same time. }
  \label{ab2}
\end{figure}

We can discuss a more complicated case: $L(b)>L(a)>L(d)>L(c)$ in Fig.(\ref{abcd}).
To save space, we omit these proofs  which  are easy to understand.
The map $O$ in Fig.(\ref{o}) can be decomposed into to maps: move the longer two rows first, then move the shortest row. However, the move of the last row is the maps shown in Fig.(\ref{te}) and (\ref{to}).
In summary, the maps preserving symbol consist of the sequence  movements of  two rows. There are two  fundamental maps: $S$ and $D$ types.
\begin{flushleft}
\textbf{{$S$-type maps}}
\end{flushleft}

\begin{figure}[!ht]
  \begin{center}
    \includegraphics[width=2.5in]{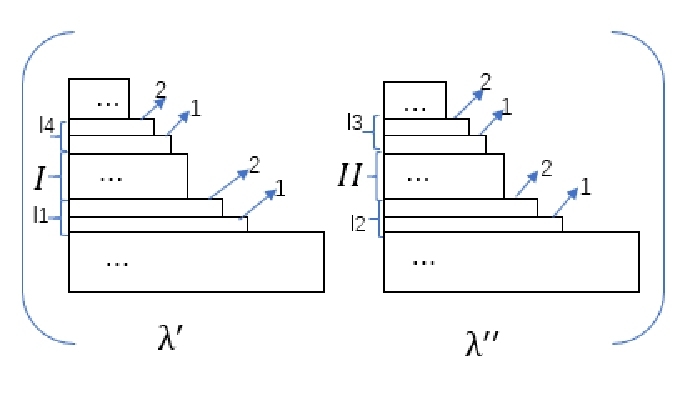}
  \end{center}
  \caption{Taking one row from $\lambda^{'}$  to  $\lambda^{''}$  and   taking one row from $\lambda^{''}$  to  $\lambda^{'}$. }
  \label{s}
\end{figure}
The  $S$ type  symbol preserving maps  which  take one row from $\lambda^{'}$  to  $\lambda^{''}$  and take one row from $\lambda^{''}$  to  $\lambda^{'}$. The most general model is  shown in Fig.(\ref{s}).
We can put the first or the second row of the pairwise rows in the  region $l1$ to     the positions $1$ or $2$  in the region $l2$. We can put the first or the second row of the pairwise rows in the region $l3$ to  the positions $1$ or $2$  in the region $l4$.

The rows in the regions $I$ and $II$ must have the opposite parities: $(odd, even)$ or  $(even, odd)$. Then the  number of the $S$ type maps  is
\begin{equation}\label{ns}
  NS=2^5.
\end{equation}

\begin{flushleft}
\textbf{{$D$-type maps}}
\end{flushleft}
The  $D$ type symbol preserving  maps which take two rows from $\lambda^{'}$  to  $\lambda^{''}$.
A  pair of even  rows can be obtained  from five  different blocks $\lambda^{'}$ as shown in Fig.(\ref{d1}).
This pairwise rows can be  inserted  into a block of $\lambda^{''}$, leading to five    different blocks as shown in Fig.(\ref{dd2}).

Considering  the  odd version of these processes,  the number of the $D$ type maps is
\begin{equation}\label{nd}
  ND=5*5*2.
\end{equation}
\begin{figure}[!ht]
  \begin{center}
    \includegraphics[width=4.4in]{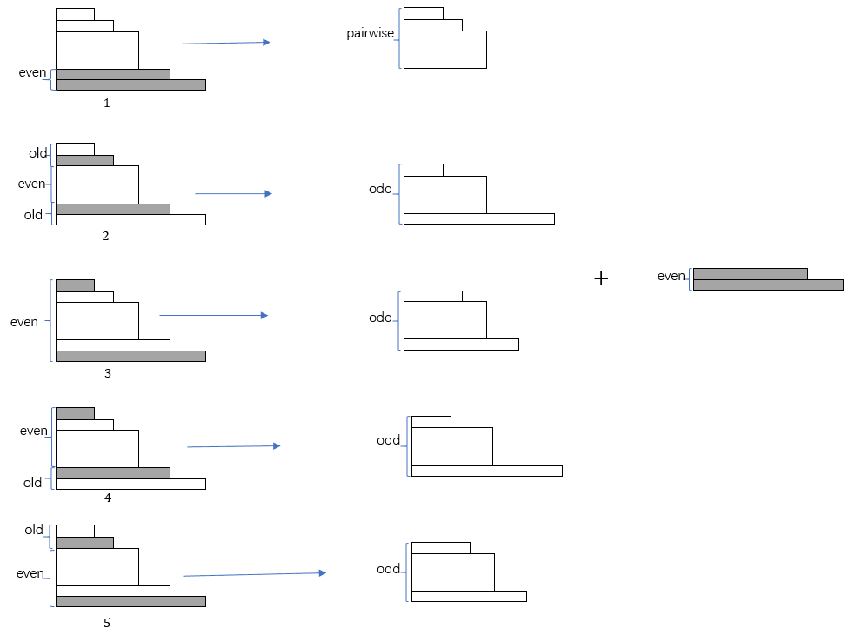}
  \end{center}
  \caption{Two rows of a block of $\lambda^{'}$ comprise an even  pairwise rows. }
  \label{d1}
\end{figure}
\begin{figure}[!ht]
  \begin{center}
    \includegraphics[width=4.4in]{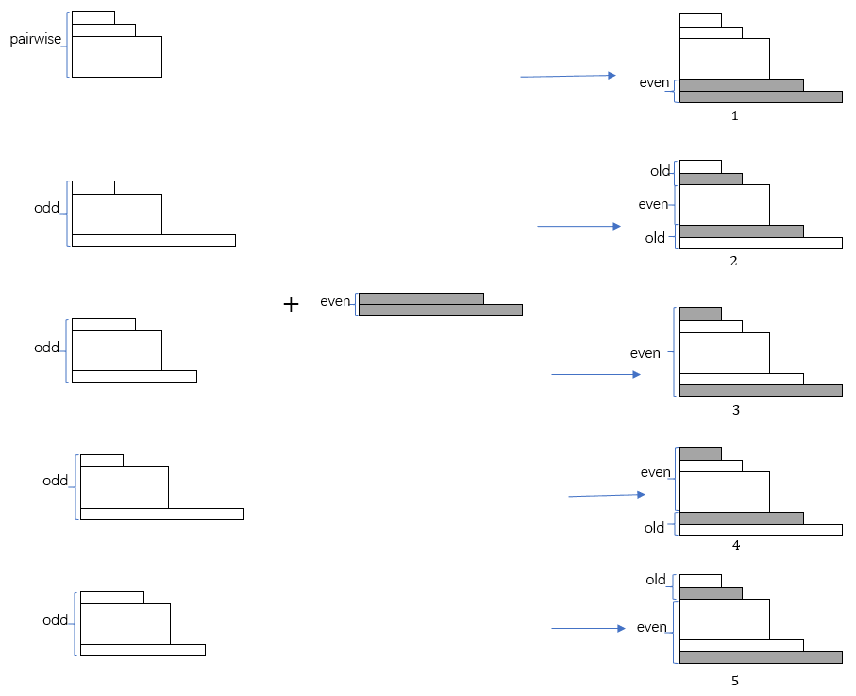}
  \end{center}
  \caption{An even  pairwise rows is inserted  into a block of $\lambda^{''}$. }
  \label{dd2}
\end{figure}

\begin{flushleft}
\textbf{{Summary}}
\end{flushleft}
All other symbol preserving maps can be constructed by $S$ and $D$ types of maps.
Then  to prove that symbol invariant implies fingerprint invariant is to   prove that these two fundamental maps  preserve the fingerprint invariant.
However, from formulas (\ref{ns}) and (\ref{nd}), the number of $S$ and $D$ types maps is
$$N=NS+ND=82,$$
which means that we should check these 82 cases to preserve the fingerprint invariant to prove the symbol invariant implies the fingerprint invariant. It is unrealistic.

In\cite{SW17-2}, the fingerprint maps are classified by decomposing  into several fundamental maps.
However, it is more complicated than that in Section \ref{mapf}, which is not conventional to prove  these fundamental maps  preserve the symbol invariant.

\newpage
\section{Rigid Surface Operators in $SO(11)$ and $Sp(10)$}
  The second  and the third columns   list   pairs of partitions corresponding to the surface operators in the $B_5$ and $C_5$ theories.
  The other  columns are  the  dimension,   symbol invariant,   and  fingerprint invariant of the rigid surface operators, respectively.
\begin{equation} {
\begin{array}{l@{\hspace{10pt}}l@{\hspace{10pt}}l@{\hspace{10pt}}l@{\hspace{10pt}}l@{\hspace{10pt}}c@{\hspace{10pt}}l}
\underline{Num}
&
\underline{Sp(10)}
&
\underline{SO(11)}
&
\underline{Dim}
&
\underline{Symbol}
&
\underline{Fingerprint}
\\[-0.5pt]
1
&
(1^{10}\,;\emp)
&
(1^{11};\emp)
 &
 0
 &
\left(\begin{array}{@{}c@{}c@{}c@{}c@{}c@{}c@{}c@{}c@{}c@{}c@{}c@{}c@{}c@{}c@{}}
 0&&0&&0&&0&&0&&0 \\
 &1&&1&&1&&1&&1&
\end{array}\right)
 &
[1^5;\emp]
 \\[-0.5pt]
 2
&
 (2\,1^{8}\,;\emp)
 &
(1;1^{10})
 &
 10
 &
\left(\begin{array}{@{}c@{}c@{}c@{}@{}c@{}c@{}c@{}c@{}c@{}c@{}c@{}c@{}}
 1&&1&&1&&1&&1 \\
&0&&0&&0&&0&
\end{array}\right)
 &
[1^4;1]
 \\[-0.5pt]
3
&
  (1^{8}\,;1^2)
 &
(2^2\,1^7;\emp)
 &
16
 &
\left(\begin{array}{@{}c@{}c@{}c@{}c@{}c@{}c@{}c@{}c@{}c@{}c@{}c@{}}
 0&&0&&0&&0&&0 \\
 &1&&1&&1&&2&
\end{array}\right)
 &
[2\,1^3;\emp]
 \\[-0.5pt]
 4
&
 (2^3\,1^4\,;\emp)
 &
(1^3;\,1^8)
 &
24
 &
\left(\begin{array}{@{}c@{}c@{}c@{}@{}c@{}c@{}c@{}c@{}c@{}c@{}}
 1&&1&&1&&1 \\
 &0&&0&&1&
\end{array}\right)
 &
[1^2;1^3]
\\[-0.5pt]
5
&
(2\,1^6\,;1^2)
 &
(1;2^2\,1^{6})
 &
 24
 &
\left(\begin{array}{@{}c@{}c@{}c@{}@{}c@{}c@{}c@{}c@{}c@{}c@{}c@{}c@{}}
 1&&1&&1&&1 \\
 &0&&0&&1&
\end{array}\right)
 &
[1^2;1^3]
 \\[-0.5pt]
 6
&
 (1^{6}\,;1^4)
 &
(2^4\,1^3;\emp)
 &
24
 &
\left(\begin{array}{@{}c@{}c@{}c@{}c@{}c@{}c@{}c@{}c@{}c@{}}
 0&&0&&0&&0 \\
 &1&&2&&2&
\end{array} \right)
 &
[2^2\,1;\emp]
 \\[-0.5pt]
7
&
 (2^4\,1^2\,;\emp)
 &
(1^7;1^4)
 &
28
 &
\left(\begin{array}{@{}c@{}c@{}c@{}c@{}c@{}c@{}c@{}c@{}c@{}}
 0&&0&&1&&1 \\
 &1&&1&&1&
\end{array}\right)
 &
[1;1^4]
 \\[-0.5pt]
 8
&
 (1^6;2\,1^2)
 &
(3\,2\,1^4;\emp)
 &
28
 &
\left(\begin{array}{@{}c@{}c@{}c@{}c@{}c@{}c@{}c@{}c@{}c@{}}
 0&&0&&1&&1 \\
 &1&&1&&1&
\end{array}\right)
 &
[1;1^4]
 \\[-0.5pt]
9
&
 (2\,1^4\,;1^4)
 &
(1^5;1^6)
 &
 30
 &
\left( \begin{array}{@{}c@{}c@{}c@{}c@{}c@{}c@{}c@{}}
 1&&1&&1 \\
 &1&&1&
\end{array} \right)
 & [\emp;1^5]
 \\[-0.5pt]
 10
&\,
 (2\,1^4\,;2\,1^2)
 &
(1;3\,2^2\,1^3)
 &
 34
 &
\left(\begin{array}{@{}c@{}c@{}c@{}@{}c@{}c@{}c@{}c@{}}
 1&&2&&2 \\
 &0&&0&
\end{array}\right)
 &
[21;2]
 \\[-0.5pt]
 11
&
  (1^2;2^3\,1^2)
 &
(2^2\,1;1^6)
 &
 34
 &
\left(\begin{array}{@{}c@{}c@{}c@{}@{}c@{}c@{}c@{}c@{}c@{}c@{}}
 1&&1&&1 \\
 &0&&2&
\end{array}\right)
 &
[3\,1;1]
 \\[-0.5pt]
 12
&
  (3^2\,2\,1^2;\emp)
 &
(1^3;2^2\,1^4)
 &
 34
 &
\left(\begin{array}{@{}c@{}c@{}c@{}c@{}c@{}c@{}c@{}}
 1&&1&&1 \\
 &0&&2&
\end{array}\right)
 &
[3\,1;1]
 \\[-0.5pt]
13
&
-
 &
 (2^2\,1^3;1^4)
&
36
 &
\left(\begin{array}{@{}c@{}c@{}c@{}@{}c@{}c@{}c@{}c@{}}
 0&&1&&1 \\
 &0&&2&
\end{array}\right)
 &
[3;1^2]
 \\[-0.5pt]
14
&
-
 &
(1^3;3\,2^21)
 &
45
 &
\left(\begin{array}{@{}c@{}c@{}c@{}c@{}c@{}}
2&&2 \\
 &1&
\end{array}\right)
 &
[3;2]
\end{array}
}\non
\end{equation}


\begin{thebibliography}{99}
\bibitem{CM93}
D.~H. Collingwood and W.~M. McGovern, { Nilpotent orbits in semisimple Lie
  algebras},
\newblock Van Nostrand Reinhold, 1993.

\bibitem{GW06}
S.~Gukov and E.~Witten, {Gauge theory, ramification, and the geometric
  {Langlands} program},  arXiv:hep-th/0612073

\bibitem{Wit07}
E.~Witten, {Surface operators in gauge theory},   {\it Fortsch. Phys.}, {\textbf
  55} (2007) 545--550.

\bibitem{GW08}
S.~Gukov and E.~Witten, {Rigid surface operators},   arXiv:0804.1561


\bibitem{Wy09}
N.Wyllard, { Rigid surface operators and $S$-duality: some proposals},   arXiv: 0901.1833



\bibitem{Lu79}
G.~Lusztig, { A class of irreducible representations of a Weyl group},  Indag.Math, 41(1979), 323-335.

\bibitem{Lu84}
G.~Lusztig, { Characters of reductive groups over a finite field},
\newblock Princeton, 1984.


\bibitem{Sp92}
N.~Spaltenstein, {Order relations on conjugacy classes and the
  {Kazhdan-Lusztig} map},  {\it Math. Ann.}, {\textbf 292} (1992) 281.

\bibitem{Montonen:1977}
C.~Montonen and D.~I. Olive, ``Magnetic monopoles as gauge particles?,'' {\em
  Phys. Lett.} {\bf B72} (1977)
117;

\bibitem{GNO76}
P.~Goddard, J.~Nuyts, and D.~I. Olive, {Gauge theories and magnetic charge},
  {\it Nucl. Phys.}, {\textbf B125} (1977)
1.

\bibitem{AKS06}
P.~C. Argyres, A.~Kapustin, and N.~Seiberg, {On {$S$-duality} for
  non-simply-laced gauge groups},  {\it JHEP}, {\textbf 06} (2006) 043,arXiv:hep-th/0603048


\bibitem{GM07}
  J.~Gomis and S.~Matsuura,
  {Bubbling surface operators and $S$-duality},  \\
  {\it JHEP}, {\textbf 06} (2007) 025,arXiv:0704.1657

\bibitem{DGM08}
N.~Drukker, J.~Gomis, and S.~Matsuura, {Probing $\cN=4$ SYM with surface
  operators},  {\it JHEP}, {\textbf 10} (2008) 048,  arXiv:0805.4199

\bibitem{GW14}
S.~Gukov, {Surfaces Operators},   arXiv:1412.7145


\bibitem{Shou-sc}
Shou, B.,  {   Symbol Invariant of Partition and Construction}, preprint, 31pp,  arXiv:1708.07084


\bibitem{SW17-2}
B.~Shou, and Q.~Wu, {Fingerprint Invariant of Partitions and Construction }, preprint, 23pp,  arXiv:1711.03443


\bibitem{HW07a}
M.~Henningson and N.~Wyllard, {Low-energy spectrum of {$\cN = 4$}
  super-{Yang-Mills} on {$T^3$}: flat connections, bound states at threshold,
  and {$S$-duality}},  {\it JHEP}, {\textbf 06} (2007), arXiv:hep-th/0703172

\bibitem{HW07b}
M.~Henningson and N.~Wyllard, {Bound states in {$\cN = 4$} {SYM} on {$T^3$}:
  {$\Spin(2n)$} and the exceptional groups},  {\it JHEP}, {\textbf 07} (2007) 084, arXiv:0706.2803


\bibitem{HW08}
M.~Henningson and N.~Wyllard, {Zero-energy states of $\cN = 4$ {SYM} on
  $T^3$: $S$-duality and the mapping class group},  {\it JHEP}, {\textbf 04} (2008)
  066, arXiv:0802.0660




\bibitem{rso}
C.Z. Li, B.Shou,  { Rigid Surface opetators and Symbol Invariant of Partitions}, Commun. Math. Phys. 405, 70 (2024), {\tt  arXiv: 1708.07388}.

\bibitem{Spaltenstein:1992}
N.~Spaltenstein, Order relations on conjugacy classes and the {Kazhdan-Lusztig} map, {\em Math. Ann.} {\bf 292} (1992) 281.


\bibitem{Lusztig:1984}
G.~Lusztig, { Characters of reductive groups over a finite field},\newblock Princeton, 1984.

\bibitem{Symbol 2}
G.~Lusztig, { A class of irreducible representations of a Weyl group}, {\em Indag.Math}, 41(1979), 323-335.

\bibitem{Sh11}
B.~Shou, J.F.~Wu and M.~Yu, { AGT conjecture and AFLT states: a complete construction}, preprint, 28 pp., arXiv:1107.4784




\bibitem{Localization}
N.~Nekrasov,  A.~Okounkov, {Seiberg-Witten theory and random partitions}, {\em The Unity of Mathematics},  525-596.  {\tt arXiv:hep-th/0306238}.




\end{thebibliography}
\end{document}

